 \numberwithin{equation}{section}
  \DeclareMathOperator{\vol}{vol}
    \DeclareMathOperator{\RR}{\mathbb{R}}
    \DeclareMathOperator{\ZZ}{\mathbb{Z}}
    \DeclareMathOperator{\CC}{\mathbb{C}}
    \DeclareMathOperator{\heis}{\mbox{Heis}}
    \DeclareMathOperator{\imm}{\mbox{Im}}
    \DeclareMathOperator{\ree}{\mbox{Re}}
 \newtheorem{thm}{Theorem}[section]
 \newtheorem{lem}[thm]{Lemma}
 \newtheorem{prop}[thm]{Proposition}
 \newtheorem{con}[thm]{Conjecture}
 \theoremstyle{definition}
 \theoremstyle{remark}
 \numberwithin{equation}{section}
\title{Spectral Asymptotics Revisited}
\author{Robert S. Strichartz\footnote{Research supported in part by the National Science Foundation, grant DMS-0652440}}
\date{}
\begin{document}
\maketitle
\begin{large}
\begin{centering}
Department of Mathematics\\
Malott Hall\\
Cornell University\\
Ithaca, NY 14853\\
\texttt{str@math.cornell.edu}\\
\end{centering}
\end{large}

\begin{abstract}
We discuss two heuristic ideas concerning the spectrum of a Laplacian, and we give theorems and conjectures from the realms of manifolds, graphs and fractals that validate these heuristics. The first heuristic concerns Laplacians that do not have discrete spectra: here we discuss the notion of ``spectral mass'' closely related to the ``integrated density of states'', an average of the diagonal of the kernel of the spectral projection operator, and show that this can serve as a substitute for the eigenvalue counting function. The second heuristic is an ``asymptotic splitting law'' that describes the proportions of the spectrum that transforms according to the irreducible representations of a finite group that acts as a symmetry group of the Laplacian. For this to be valid we require the existence of a fundamental domain with relatively small boundary. We also give a version in the case that the symmetry groups is a compact Lie group. Many of our results are reformulations of known results, and some are merely conjectures, but there is something to be gained by looking at them together with a unified perspective. 
\end{abstract}

\section{Introduction}

What is ``spectral asymptotics'' if not the study of the asymptotic behavior as $\lambda\to\infty$ of the eigenvalue counting function

\begin{equation}
N(\lambda)=\#\{\lambda_{j}\leq\lambda\}
\end{equation} of some operator? For this to make sense the operator must have a discrete spectrum. In this paper we restrict attention to operators that may be called ``Laplacians'', but the classical theory [RS] is not restricted to Laplacians, and many of the ideas in this paper can be extended at least to Schr\"odinger operators (Laplacian plus potential) and perhaps beyond. But what if the operator does not have a discrete spectrum? Must we give up on the idea of spectral asymptotics? Not at all!

We will discuss a notion of ``spectral mass'' $M(\lambda)$ that can replace $N(\lambda)$. To motivate the definition recall the famous Weyl asymptotic law

\begin{equation}\label{eq:weyl}
N(\lambda)=c_{n}\vol(\Omega)\lambda^{n/2}+o(\lambda^{n/2})\text{ as }\lambda\to\infty
\end{equation} when the operator is $-\Delta$ on a compact domain $\Omega$ in $\RR^{n}$ (or more generally a Riemannian manifold of dimenson $n$) with smooth boundary with Dirichlet or Newmann boundary conditions. (Here $c_{n}$ is an explicit dimensional constant.) We divide~(\ref{eq:weyl}) by $\vol(\Omega)$, and note that $N(\lambda)/\vol(\Omega)$ is equal to the average value of $K_{\lambda}(x,x)$ on $\Omega$, where $K_{\lambda}$ is the kernel of the spectral projection operator $E_{\lambda}$ onto the $[0,\lambda]$ portion of the spectrum,
\begin{equation}\label{eq:overline}
K_{\lambda}(x,y)=\sum_{\lambda_{j}\leq \lambda}\varphi_{j}(x)\overline{\varphi_{j}(x)}
\end{equation} where $\{\varphi_{j}\}$ is an orthonormal basis of eigenfunctions with associated eigenvalues $\{\lambda_{j}\}$. So in this case we define the spectral mass function
\begin{equation}
M(\lambda)=\frac{1}{\vol(\Omega)}\int_{\Omega}K_{\lambda}(x,x)d\mu(x),
\end{equation}($\mu$ is Lebesgue measure or the Riemannian mesure on $\Omega$). Our simple observation above is that 
\begin{equation}
M(\lambda)=\frac{N(\lambda)}{\vol(\Omega)},
\end{equation} and so Weyl's asymptotic law says 
\begin{equation}\label{eq:M}
M(\lambda)=c_{n}\lambda^{n/2}+o(\lambda^{n/2}).
\end{equation}

Now  consider a Laplacian so that $-\Delta$ is a nonnegative self-adjoint operator on $L^{2}(\Omega,\mu)$ where the measure is not finite. The spectral theorem still supplies the spectral projection operators $E_{\lambda}$, and in many cases these are given by integration against a kernel $K_{\lambda}(x,y)$, but of course~(\ref{eq:overline}) is not valid. Our first heuristic idea is to try to define $M(\lambda)$ as some sort of average value of $K_{\lambda}(x,x)$ and to study its asymptotic behavior. Note that we do \emph{not} expect~(\ref{eq:M}) to hold, and indeed in many cases the asymptotic behavior will be as $\lambda\to0$ or some other natural value. Also, in some cases we may have to settle for upper and lower ($M^{+}(\lambda)$, $M^{-}(\lambda)$) spectral masses if we can't prove that a true average exists. This is clearly a heuristic idea because it is necessary to decide on the meaning of ``average'' in each context.

We should point out that the same idea has been used in quantum mechanics under the name \emph{integrated density of states}. The usual context is the Schr\"odinger operator $-\Delta+V(x)$ on Euclidean space $\RR^{n}$ for a potential $V(x)$ that is periodic [BSh], almost periodic [Sh],[PS], or random ([KM] and references therein). The most common definition is to take
 
 \begin{equation}\label{eq:integrateddensityofstates}
 N(\lambda)=\lim_{k\to\infty}\frac{1}{|\Omega_{k}|}N_{k}(\lambda),
 \end{equation}
 where $\Omega_{k}$ is a sequence of regular domains increasing to $\RR^{n}$, and $N_{k}$ is the eigenvalue counting function for $-\Delta+V(x)$ on $\Omega_{k}$ with suitable boundary conditions (in the periodic case it is typical to take $\Omega_{k}$ to be cubes with periodic boundary conditions). The advantage of this definition is that it allows for numerical approximations, it avoids dealing with the spectral projections for operators that do not have discrete spectrum, and it implies monotonicity with respect to the operator. It is shown in [Sh] that for almost periodic potentials the definition agrees with our definition (2.5), where the average is the Bohr mean of almost periodic functions. The main point of this paper is to export this idea to many new contexts, and this is the justification for introducing the term ``spectral mass'' in place of ``integrated density of states.''
 
 In Section 2 we will elaborate on the idea of a spectral mass, establishing its elementary properties (positivity and monotonicity), discuss its relationship to the heat kernel, and present a conjecture related to the minmax characterization of eigenvalues that would allow simple estimates of spectral masses.
 
 In Section 3 we discuss examples where the Laplacian is  homogeneous, so that $K_{\lambda}(x,x)$ is constant and there is no problem defining the average. 
 
 For Euclidean space, we obtain~(\ref{eq:M}) exactly (no error term), while for the lattice $\ZZ^{n}$ we obtain~(\ref{eq:M}) as $\lambda\to0$ (the spectrum is bounded). For hyperbolic space we obtain (1.6) with error term $O(\lambda^{\frac{n-2}{2}})$, as a consequence of an integral formula for $M(\lambda)$. For the Heisenberg subLaplacian we obtain
 \begin{equation}
M(\lambda)=c'_{n}\lambda^{n+1} 
 \end{equation} for a constant $c'_{n}$ given by an infinite series ($c'_{1}=\frac{1}{32}$, $c'_{2}=\frac{1}{576\pi}$). Here the Heisenberg group has Euclidean dimension $2n+1$ but Hausdorff dimension $2n+2$, so $n+1$ is again half the Hausdorff dimension. For the Laplacian on a homogeneous tree of degree $q+1$, we obtain an integral formula for $M(\lambda)$ and an asymptotic formula
 \begin{equation}
M(\lambda_{0}+\epsilon) =c''_{q}\epsilon^{3/2}+O(\epsilon^{3/2})\text{ as }\epsilon\to 0^{+}
 \end{equation}where $\lambda_{0}=q+1-2\sqrt{q}$ is the bottom of the spectrum (note that the exponent $\frac{3}{2}$ is independent of $q$).
 
 These results are all simple consequences of well-known formulas for harmonic analysis on these spaces, but our results place all these disparate facts together in a coherent context. 
 
 In Section 4 we discuss some inhomogeneous examples. The simplest of these are the half-line and the Euclidean or hyperbolic half-space, with either Dirichlet or Neumann boundary conditions. In these examples $K_{\lambda}(x,x)$ can be written as a sum of the kernel for the full space plus a term that decays as $x\to\infty$. The decaying term thus contributes zero toward the average value, so $M(\lambda)$ is the same for the half-space as for the full space. More interesting examples are provided by unbounded open domains in $\RR^{n}$. We are not able to obtain the desired results for such examples, but we sketch three different approaches to studying the problem. The issues here are quite technical since it is necessary to impose some hypothesis on the domain. This is an area that is certainly ripe for future development.
 
 In Section 5 we come to our second main heuristic idea, which we call the asymptotic splitting law. Suppose the space $X$ and the Laplacian have a finite symmetry group $G$. Then $G$ has a finite number of inequivalent unitary representations $\{\pi_{j}\}$ with  dimension $\{d_{j}\}$, and the dimension formula says 
 \begin{equation}
\sum_{j}d^{2}_{j}=\# G.
\end{equation} We may sort the eigenspaces into subspaces that transform according to the representations $\{\pi_{j}\}$, and write $M(\lambda)$ as the sum of $M_{j}(\lambda)$ where each $M_{j}(\lambda)$ gives the contribution to the spectral mass arising from the representation $\pi_{j}$. The asymptotic splitting law heuristic is
\begin{equation}
\lim_{\lambda\to\infty}\frac{M_{j}(\lambda)}{M(\lambda)}=\frac{d_{j}^{2}}{\# G}.
\end{equation} This statement is not always valid, as we demonstrate by simple counter-examples . For it to have any hope of being valid, we must have that most orbits under the $G$ action on $X$ be full orbits. But even more than this, we need the existence of a fundamental domain for the action that has boundary that is relatively small. We give a precise estimate for Laplacians on finite graphs for the difference between $M_{j}(\lambda)/M(\lambda)$ and $d_{j}^{2}/\# G$ for all values of $\lambda$. In some noncompact settings we can show that these quantities are exactly the same for all $\lambda$. One interesting example we discuss is the Sierpinski gasket and related fractals. Here we are able to establish (1.11) with a much smaller error term than we would expect for nonfractal examples.

In Section 6 we discuss similar questions when the symmetry group $G$ is a compact Lie group. In this case there is no dimension formula, but we can still sort the eigenspaces according to the irreducible representations, and we can compare $M_{j}(\lambda)/M_{k}(\lambda)$ with $d^{2}_{j}/d^{2}_{k}$ for different representations. 

In section 7 we discuss a noncompact fractal example, the generic infinite blowup of the Sierpinski gasket. The structure of the spectrum of the Laplacian here has been described by Teplyaev [Te]; there is an orthonormal basis of compactly supported eigenfunctions, but each eigenspace has infinite multiplicity, and   the closure of the set of eigenvalues is a Cantor set. We present evidence that the spectral mass function exists and has the same asymptotics  as the eigenvalue counting function on the Sierpinski gasket. 

Finally, in Section 8, we discuss the relationship between the spectral mass function on a space $X$ and a covering space $\widetilde{X}$ of $X$. In typical examples $X$ is compact and $\widetilde{X}$ noncompact, and we want to pass from information about the spectral mass function $\widetilde{M}(\lambda)$ on $\widetilde{X}$ to information about $M(\lambda)$ (and hence $N(\lambda)$) on $X$ from the identity
\begin{equation}
K_{\lambda}(x,x)=\sum_{\gamma\in\Gamma}\widetilde{K}_{\lambda}(\gamma \widetilde{x},\widetilde{x}),
\end{equation} where $\widetilde{x}$ projects to $x$ and $\Gamma$ is the group of deck transformations. The term corresponding to $\gamma=e$, the identity, contributes to $M(\lambda)$ exactly the value $\widetilde{M}(\lambda)$. We could conclude that $M(\lambda)$ has the same asymptotics as $\widetilde{M}(\lambda)$ if we could show that the sum of all the other terms on the right side of (1.12) has slower growth rate. We give some simple examples where this is the case, but with the caveat that the infinite series is only conditionally convergent. We discuss the example of hyperbolic manifolds of finite volume, where such conclusions would be very interesting, but the technical challenge of estimating (1.12) seems difficult. For the example of the compact quotients of the Heisenberg group, we note that the desired estimates have been obtained by Taylor [T] using heat kernel methods. Such an approach might well be more effective for dealing with other examples discussed in this section. For a direct approach to these quotients, see Folland [Fo].

There are many earlier words that are closely related to our ideas. H\"ormander [H] gives estimates for the asymptotics of $K_{\lambda}(x,x)$ in the case of an elliptic pseudodifferential operator on a manifold, but the estimates are not uniform in $x$ at infinity or near the boundary (but also see [I]), and so do not immediately yield information about $M(\lambda)$. That paper also gives references to earlier work on the topic. The paper of Cheeger, Gromov and Taylor [CGT] gives estimates for the kernels of many functions of the Laplacian based on wave equations methods.  Wave equation methods were introduced by Levitan [L] in 1952 (see also [SV1],[SV2]). Many of the expressions for $K_{\lambda}$ in the examples in section 3 may be found in our earlier paper [S1] and in the book of Taylor [T].

This paper is woefully incomplete, and is offered in the spirit of adventure. We hope that adventurous readers will be inspired to tackle some of the conjectures and questions that are scattered throughout the text.
 
 \section{Spectral mass}

Let $E_{\lambda}$ denote the spectral resolution of a nonnegative self-adjoint operator $A$ on $L^{2}(X,\mu)$, so $E_{\lambda}$ is the projection operator onto the $[0,\lambda]$ portion of the spectrum of $A$. In many cases $E_{\lambda}$ is an integral operator with a continuous kernel $K_{\lambda}(x,y)$:
\begin{equation}
E_{\lambda}f(x)=\int_{X}K_{\lambda}(x,y)f(y)d\mu(y).
\end{equation}

For example, if $A$ has a discrete spectrum of $L^{2}$ eigenfunctions $\varphi_{j}$ with eigenvalues $\lambda_{j}$, 
\begin{equation}
A\varphi_{j}=\lambda_{j}\varphi_{j},
\end{equation} $\{\varphi_{j}\}$ an orthonormal basis of $L^{2}$, then if $E_{\lambda}$ has finite dimensional range we have
\begin{equation}
K_{\lambda}(x,y)=\sum_{\lambda_{j}\leq \lambda}\varphi_{j}(x)\overline{\varphi_{j}(y)}\;\;\;\;\text{  (finite sum).}
\end{equation}

In this case,
\begin{equation}
\int K_{\lambda}(x,x)d\mu(x)=N(y)=\#\{\lambda_{j}\leq\lambda\}=\dim\mbox{Range }E_{\lambda}.
\end{equation}

More generally, we would like to define a \emph{spectral mass function} $M(\lambda)$ that measures the ``size'' of projection $E_{\lambda}$. We propose taking
\begin{equation}
M(\lambda)=\mbox{Average}(K_{\lambda}(x,x)).
\end{equation}

Of course this is not a precise definition, since we have to say what we mean by ``Average'', and this will have to depend on the context. Typically the average will be defined by
\begin{equation}
\lim_{k\to\infty}\frac{1}{\mu(B_{k})}\int_{B_{k}}K_{\lambda}(x,x)d\mu(x)
\end{equation}
for some ``reasonable'' sequence of sets $B_{k}$ increasing to $X$, where the limit is independent of the choice of the reasonable sequence. We may have to settle for $\liminf$ and $\limsup$ in place of $\lim$ in (2.6). Two situations where the meaning of Average is obvious:

(i) if $\mu(x)<\infty$ then the average is just
\begin{equation}
\frac{1}{\mu(X)}\int_{X}K_{\lambda}(x,x)d\mu(x), \;\;\;\text{ in which case}
\end{equation}
\begin{equation}
M(\lambda)=\frac{1}{\mu(X)}N(\lambda); \text{ and}
\end{equation}

(ii) if $K_{\lambda}(x,x)$ is constant in $X$, then the Average is this constant.

If $X$ is a metric space we would hope to choose $B_{k}$ to be the balls of radius $k$ centered at some point $z$, with (2.6) independent of the choice of $z$.

The following known result provides some basic properties of $K_{\lambda}(x,x)$. We provide the simple proofs for the benefit of the reader.

\begin{lem}
a) $K_{\lambda}(x,x)$ is nonnegative (possibly $+\infty$).

b) $K_{\lambda}(x,x)$ is monotone increasing in $\lambda$.
\end{lem}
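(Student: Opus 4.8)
The plan is to exploit the two defining algebraic properties of the spectral projection $E_{\lambda}$: it is self-adjoint, $E_{\lambda}^{*}=E_{\lambda}$, and idempotent, $E_{\lambda}^{2}=E_{\lambda}$. In terms of the kernel, self-adjointness gives $K_{\lambda}(y,x)=\overline{K_{\lambda}(x,y)}$, while idempotence, written as $E_{\lambda}f=E_{\lambda}(E_{\lambda}f)$ for all $f\in L^{2}(X,\mu)$ and with the order of integration interchanged, gives the reproducing identity
\[
K_{\lambda}(x,y)=\int_{X}K_{\lambda}(x,z)\,K_{\lambda}(z,y)\,d\mu(z).
\]
Combining the two and setting $y=x$ yields $K_{\lambda}(x,x)=\int_{X}K_{\lambda}(x,z)\,\overline{K_{\lambda}(x,z)}\,d\mu(z)=\int_{X}|K_{\lambda}(x,z)|^{2}\,d\mu(z)$, which is manifestly nonnegative, and is $+\infty$ precisely when $K_{\lambda}(x,\cdot)\notin L^{2}(X,\mu)$. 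This proves part (a).

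For part (b), I would observe that for $0\le\lambda\le\lambda'$ the difference $E_{\lambda'}-E_{\lambda}$ is the spectral projection onto the portion $(\lambda,\lambda']$ of the spectrum; since $E_{\lambda}E_{\lambda'}=E_{\lambda'}E_{\lambda}=E_{\lambda}$, it is again a self-adjoint idempotent, with continuous kernel $K_{\lambda'}(x,y)-K_{\lambda}(x,y)$. Applying the formula just derived to this projection gives
\[
K_{\lambda'}(x,x)-K_{\lambda}(x,x)=\int_{X}\bigl|K_{\lambda'}(x,z)-K_{\lambda}(x,z)\bigr|^{2}\,d\mu(z)\ge 0,
\]
which is the asserted monotonicity.

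The one genuinely technical point is the interchange of integration used to establish the reproducing identity, since the kernels need not be absolutely integrable against one another; the clean way to sidestep this is to argue at the level of quadratic forms. Fix $x\in X$ and let $\phi_{\varepsilon}$ be an approximate identity concentrating at $x$. Then $\langle E_{\lambda}\phi_{\varepsilon},\phi_{\varepsilon}\rangle=\|E_{\lambda}\phi_{\varepsilon}\|_{2}^{2}\ge 0$, and by continuity of $K_{\lambda}$ the left side converges to $K_{\lambda}(x,x)$ as $\varepsilon\to 0$; likewise $\langle(E_{\lambda'}-E_{\lambda})\phi_{\varepsilon},\phi_{\varepsilon}\rangle=\|(E_{\lambda'}-E_{\lambda})\phi_{\varepsilon}\|_{2}^{2}\ge 0$ converges to $K_{\lambda'}(x,x)-K_{\lambda}(x,x)$. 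This version imposes no integrability demands and also makes transparent why the ``possibly $+\infty$'' caveat is needed: when $E_{\lambda}$ is not a genuine integral operator with an honest function kernel (for instance when $K_{\lambda}(x,x)$ is defined only as a limit of smoothed diagonals), the quantities $\|E_{\lambda}\phi_{\varepsilon}\|_{2}^{2}$ are still nonnegative and still monotone in $\lambda$, but their limit may be infinite.
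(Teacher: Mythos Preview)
Your proof is correct and follows essentially the same route as the paper: use idempotence plus self-adjointness of $E_{\lambda}$ to write $K_{\lambda}(x,x)=\int_{X}|K_{\lambda}(x,z)|^{2}\,d\mu(z)$, and then apply the same identity to the projection $E_{\lambda'}-E_{\lambda}$ for monotonicity. Your approximate-identity argument to justify the reproducing identity is a nice extra layer of rigor that the paper omits.
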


\begin{proof}
a) Since $E_{\lambda}$ is a projection
\begin{equation}
K_{\lambda}(x,z)= \int_{X}K_{\lambda}(x,y)K_{\lambda}(y,z)d\mu(y)
\end{equation} If we set $x=z$ and observe that $K_{\lambda}(y,x)=\overline{K_{\lambda}(x,y)}$ because $K_{\lambda}$ is self-adjoint, (2.9) yields
\begin{equation}
K_{\lambda}(x,x)=\int_{X}|K_{\lambda}(x,y)|^{2}d\mu(y).
\end{equation}

b)  Suppose $\lambda'<\lambda$. Because $E_{\lambda}$ is a spectral resolution, $E_{\lambda}-E_{\lambda'}$ and $E_{\lambda'}$ are orthogonal projections ($E_{\lambda}-E_{\lambda'}$ is the spectral projection onto the $[\lambda',\lambda]$ portion of the spectrum) with kernels $K_{\lambda}-K_{\lambda'}$ and $K_{\lambda'}$. The orthogonality implies
\begin{equation}
\int_{X}(K_{\lambda}(x,y)-K_{\lambda'}(x,y))K_{\lambda'}(y,z)d\mu(y)=0,
\end{equation} so the same reasoning that leads to (2.10) shows
\begin{equation}
K_{\lambda}(x,x)-K_{\lambda'}(x,x)=\int_{X}|K_{\lambda}(x,x)-K_{\lambda'}(x,y|^{2}d\mu(y)
\end{equation}
\end{proof}

The lemma shows that if we can make sense of (2.5), then the spectral mass will be nonnegative and monotone increasing in $\lambda$.

It is interesting to compare the spectral mass with a more familiar object, the heat kernel on the diagonal. Let $h_{t}(x,y)$ denote the kernel for the heat semigroup $e^{t\Delta}$. There is a vast literature on estimates for heat kernel, both on and off the diagonal ([G]). Typical results are upper
\begin{equation}
h_{t}(x,x)\leq c_{1}\Phi(t)
\end{equation}
and lower
\begin{equation}
h_{t}(x,x)\geq c_{1}\Phi(t)
\end{equation} on diagonal bounds, for specific (or sometimes generic) functions $\Phi(t)$, often a negative power $\Phi(t)=t^{-b}$ for some $b>0$. Define
\begin{align}
M^+(\lambda)&=\limsup_{k\to\infty}\frac{1}{\mu(B_{k})}\int_{B_{k}}K_{\lambda}(x,x)d\mu(x),\\
M^{-}(\lambda)&=\liminf_{k\to\infty}\frac{1}{\mu(B_{k})}\int_{B_{k}}K_{\lambda}(x,x)d\mu(x)\notag
\end{align} and
\begin{align}
H^+(t)&=\limsup_{k\to\infty}\frac{1}{\mu(B_{k})}\int_{B_{k}}h_{t}(x,x)d\mu(x),\\
H^{-}(t)&=\liminf_{k\to\infty}\frac{1}{\mu(B_{k})}\int_{B_{k}}h_{t}(x,x)d\mu(x).\notag
\end{align} Since we know

\begin{align}
h_{t}(x,x)=\int_{0}^{\infty}te^{-\lambda t}K_{\lambda}(x,x)d\lambda
\end{align} we may draw various conclusions relating these quantities. For example, the upper estimate (2.13) implies
\begin{align}
H^{+}(t)\leq c_{1}\Phi(t),
\end{align} and similarly (2.14) implies
\begin{align}
H^{-}(t)\geq c_{2}\Phi(t).
\end{align}

On the other hand (2.17) implies 
\begin{align}
K_{\lambda}(x,x)\leq eh_{1/\lambda}(x,x)\;\;\;\text{hence}
\end{align}
\begin{align}
\frac{1}{\mu(B_{k})}\int_{B_{k}}K_{\lambda}(x,x)d\mu(x)\leq \frac{e}{\mu(B_{k})}\int_{B_{k}}h_{1/\lambda}(x,x)d\mu(x)
\end{align} and finally
\begin{equation}
M^{\pm}(\lambda)\leq e H^{\pm}(1/\lambda)
\end{equation} So the upper estimate (2.13) implies
\begin{equation}
M^{+}(\lambda)\leq ec_{1}\Phi(1/\lambda).
\end{equation}

Of course, if we know power law limits for the heat kernel, we can use (2.17) and the Karamata Tauberian Theorem to obtain power limits for $K_{\lambda}(x,x)$.

One of the main tools is estimating the eigenvalue counting function $N(\lambda)$ is the Rayleigh quotient 
\begin{equation}
\mathcal{R}(u)=\frac{\mathcal{E}(u,u)}{||u||^{2}},
\end{equation} where $\mathcal{E}$ is the energy associated to the Laplacian and the measure via
\begin{equation}
\mathcal{E}(u,v)=-\int(\Delta u)vd\mu
\end{equation} The minmax characterization of eigenvalues leads to
\begin{equation}
N(\lambda)=\max\{\dim L:\mathcal{R}|_{L}\leq \lambda\}.
\end{equation} If $\Delta$ and $\Delta'$ are two Laplacians and 
\begin{equation}
\mathcal{R}(u)\leq\mathcal{R}'(u)\;\;\;\text{for all $u$},
\end{equation} where $\mathcal{R}'$ is the Rayleigh quotient for $\Delta'$, then (2.16) implies
\begin{equation}
N(\lambda)\geq N'(\lambda).
\end{equation}

Typically we start with one Laplacian and construct two simpler Laplacians $\Delta_{1}$ and $\Delta_{2}$ for which
\begin{equation}
\mathcal{R}_{1}(u)\leq\mathcal{R}(u)\leq\mathcal{R}_{2}(u).
\end{equation} If we can show that $\Delta_{1}$ and $\Delta_{2}$ have the same spectral asymptotics, then we may conclude that $\Delta$ also has the same spectral asymptotics,

In the general context, the Rayleigh quotient is well-defined by (2.24) and (2.25). 

\begin{con}
Suppose $\Delta$ and $\Delta'$ are two Laplacians for which (2.27) holds. Then
\begin{equation}
M^{\pm}(\lambda)\geq (M^{\pm})'(\lambda).
\end{equation}
\end{con}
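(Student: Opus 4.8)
The plan is to reduce the inequality to the monotonicity of ordinary eigenvalue counting functions on large finite pieces of $X$, where the comparison $N(\lambda)\geq N'(\lambda)$ implied by (2.27) is available. Two cases are immediate and should be dealt with first. If $\mu(X)<\infty$, then $M^{\pm}(\lambda)=N(\lambda)/\mu(X)$ and $(M^{\pm})'(\lambda)=N'(\lambda)/\mu(X)$ with the \emph{same} normalization $\mu(X)$, so the conclusion follows with no error term at all. If both $\Delta$ and $\Delta'$ are homogeneous (the setting of Section~3), one passes to the Plancherel decomposition: for $\RR^{n}$ both operators become multiplication by symbols $a(\xi),a'(\xi)$, and (2.27) forces $a(\xi)\leq a'(\xi)$ a.e., whence $E_{\lambda},E_{\lambda}'$ become multiplication by $\mathbf{1}_{\{a\leq\lambda\}},\mathbf{1}_{\{a'\leq\lambda\}}$ with $\{a'\leq\lambda\}\subseteq\{a\leq\lambda\}$, giving the much stronger \emph{pointwise} inequality $K_{\lambda}(x,x)\geq K'_{\lambda}(x,x)$; on the other homogeneous spaces the same conclusion comes from applying the min-max comparison of counting functions on each Plancherel fibre, where the fibre operator has discrete spectrum.

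In general neither feature is present, and one should not expect the pointwise inequality: (2.27) does \emph{not} imply $E_{\lambda}\geq E_{\lambda}'$ as operators --- for the Laplacians $-\Delta$ and $-\Delta+V$ on $\RR^{n}$ with $V\geq0$, $V\not\equiv0$, the range of $E_{\lambda}'$ is not contained in the band-limited functions making up the range of $E_{\lambda}$ --- so the averaging built into $M^{\pm}$ is essential. The route I would take is Dirichlet--Neumann bracketing. Fix $\lambda$ and the exhausting sequence $B_{k}$, and partition each $B_{k}$ into finitely many cells $C$. On each $C$ the Neumann realizations $\Delta^{N}_{C}$ and $(\Delta')^{N}_{C}$ act on the finite-measure space $C$ and have discrete spectrum; since the energies $\mathcal{E},\mathcal{E}'$ are given by local expressions, (2.27) descends to the corresponding Rayleigh quotients on $C$, so the counting-function comparison gives $N^{N}_{C}(\lambda)\geq(N')^{N}_{C}(\lambda)$, and likewise with Dirichlet conditions. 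Standard Dirichlet--Neumann bracketing sandwiches the quantity of interest,
\begin{equation*}
\sum_{C}N^{D}_{C}(\lambda)\;\leq\;\int_{B_{k}}K_{\lambda}(x,x)\,d\mu(x)\;\leq\;\sum_{C}N^{N}_{C}(\lambda),
\end{equation*}
the sum running over the cells of $B_{k}$, and the identical sandwich holds for $\Delta'$.

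Chaining the lower bracketing bound for $\Delta$, the cellwise comparison $N^{D}_{C}(\lambda)\geq(N')^{D}_{C}(\lambda)$, and the upper bracketing bound for $\Delta'$ yields
\begin{equation*}
\int_{B_{k}}K_{\lambda}(x,x)\,d\mu(x)\;\geq\;\int_{B_{k}}K'_{\lambda}(x,x)\,d\mu(x)\;-\;\sum_{C}\bigl((N')^{N}_{C}(\lambda)-(N')^{D}_{C}(\lambda)\bigr),
\end{equation*}
so, dividing by $\mu(B_{k})$ and passing to $\limsup_{k}$ for $M^{+}$ and to $\liminf_{k}$ for $M^{-}$, one gets $M^{\pm}(\lambda)\geq(M^{\pm})'(\lambda)$ as soon as the total Neumann--Dirichlet defect $\sum_{C}\bigl((N')^{N}_{C}(\lambda)-(N')^{D}_{C}(\lambda)\bigr)$ is $o(\mu(B_{k}))$. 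That last point --- that the cell boundaries be collectively negligible relative to $B_{k}$, which forces one to let cell size and $k$ go to infinity in a coordinated way --- is the main obstacle, and it is exactly the ``relatively small boundary'' hypothesis recurring throughout the paper; it is known for periodic, almost periodic and random operators on $\RR^{n}$ (cf.\ [Sh],[KM]), and its unavailability in general is why the statement is only a conjecture. As a side remark, for Schr\"odinger operators $-\Delta+V$ and $-\Delta+V'$ with $V\leq V'$ the heat kernels satisfy $h'_{t}(x,x)\leq h_{t}(x,x)$ pointwise and unconditionally by the Feynman--Kac formula, which via the Karamata Tauberian argument of this section at least delivers the comparison of $M^{\pm}$ whenever both obey a power law, and suggests that heat-kernel methods may be the more robust route to the full conjecture.
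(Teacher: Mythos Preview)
The statement you are addressing is \emph{Conjecture}~2.2; the paper offers no proof. Its only comments are that (i) the conjecture should be accessible whenever the integrated-density-of-states definition~(\ref{eq:integrateddensityofstates}) is known to coincide with the spectral mass, and (ii) it would follow from Conjecture~2.3. So there is nothing in the paper to benchmark your argument against, and your closing acknowledgment that the boundary-defect issue ``is why the statement is only a conjecture'' is accurate.

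That said, there is a genuine gap in your sketch beyond the one you flag. You write the sandwich
\[
\sum_{C}N^{D}_{C}(\lambda)\;\leq\;\int_{B_{k}}K_{\lambda}(x,x)\,d\mu(x)\;\leq\;\sum_{C}N^{N}_{C}(\lambda)
\]
and call it ``standard Dirichlet--Neumann bracketing.'' It is not. Standard bracketing compares the counting functions of the operator restricted to $B_{k}$ under different boundary conditions; your middle term is $\mathrm{tr}\,(\mathbf{1}_{B_{k}}E_{\lambda})$ for the spectral projection $E_{\lambda}$ of the operator on the \emph{whole} space $X$, which is not a counting function for any boundary-value problem on $B_{k}$. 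Relating that localized trace to Dirichlet/Neumann counts on $B_{k}$ is exactly the equivalence of the spectral-mass and integrated-density-of-states definitions --- the result the paper attributes to [Sh] in the almost-periodic case and leaves open otherwise --- and the lower inequality in particular is essentially a special case of Conjecture~2.3 (take $L$ to be the span of the cell Dirichlet eigenfunctions). So your argument, rather than reducing Conjecture~2.2 to a boundary estimate, presupposes the very ingredient the paper isolates as unresolved. Your finite-measure and homogeneous remarks are fine, and the Feynman--Kac/Tauberian aside is a reasonable heuristic, but the general case has not been moved past the paper's own observations.
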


This conjecture would allow us to use Reyleigh quotient estimates to obtain spectral mass estimates. It would appear that it would be easy to prove this conjecture in contexts where the integrated density of states definition~(\ref{eq:integrateddensityofstates}) is valid. It is also an easy consequence of the next conjecture.

\begin{con}
Let $L$  denote a closed subspace of $L^{2}(\mu)$, and $K_{L}(x,y)$ the kernel of the orthogonal projection onto $L$. Then
\begin{align}
M^{+}&=\max\left\{\limsup_{k\to\infty}\frac{1}{\mu(B_{k})}\int_{B_{k}}K_{L}(x,x)d\mu(x):\mathcal{R}|_{L}\leq\lambda\right\}\\
M^{-}&=\max\left\{\liminf_{k\to\infty}\frac{1}{\mu(B_{k})}\int_{B_{k}}K_{L}(x,x)d\mu(x):\mathcal{R}|_{L}\leq\lambda\right\}\notag
\end{align} 
\end{con}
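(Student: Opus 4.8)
The natural candidate for the subspace attaining the maximum is $L=\mathrm{Range}(E_{\lambda})$ itself. For this choice $K_{L}=K_{\lambda}$, so the two averages on the right-hand sides are literally $M^{+}(\lambda)$ and $M^{-}(\lambda)$; moreover $\mathcal{R}|_{L}\le\lambda$, since for $u=E_{\lambda}u$ one has $\mathcal{E}(u,u)=\langle Au,u\rangle=\int_{0}^{\lambda}t\,d\langle E_{t}u,u\rangle\le\lambda\|u\|^{2}$. Hence each set in the statement contains the corresponding value $M^{\pm}(\lambda)$, which gives the inequality ``$\ge$'' and pinpoints where the maximum should be attained. The entire content of the conjecture is therefore the reverse inequality: for every closed subspace $L$ with $\mathcal{R}|_{L}\le\lambda$,
\begin{equation*}
\limsup_{k\to\infty}\frac{1}{\mu(B_{k})}\int_{B_{k}}K_{L}(x,x)\,d\mu(x)\ \le\ M^{+}(\lambda),
\end{equation*}
and likewise with $\liminf$ and $M^{-}(\lambda)$.

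I would attack this by a localization-and-minmax argument modelled on the integrated density of states~(\ref{eq:integrateddensityofstates}). Fix cutoffs $\chi_{k}$ supported in $B_{k}$, equal to $1$ off a thin collar near $\partial B_{k}$, with $\nabla\chi_{k}$ supported in that collar. For $u\in L$ the function $\chi_{k}u$ lies in the Dirichlet form domain of $B_{k}$, and the Leibniz rule gives $\mathcal{E}^{(k)}(\chi_{k}u,\chi_{k}u)\le\mathcal{E}(u,u)+(\text{collar term})$, so $\mathcal{R}^{(k)}$ on $\chi_{k}L$ is $\le\lambda+\varepsilon_{k}$ up to contributions supported in the collar, with $\varepsilon_{k}\to0$ once the collar volume is negligible compared with $\mu(B_{k})$. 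Classical minmax on $B_{k}$, in the form~(2.26), then bounds $\dim(\chi_{k}L)$ by $N_{k}(\lambda+\varepsilon_{k})$. It remains to compare $\dim(\chi_{k}L)$ with $\int_{B_{k}}K_{L}(x,x)\,d\mu(x)=\|P_{L}\mathbf{1}_{B_{k}}\|_{\mathrm{HS}}^{2}$ up to a collar error, and $\tfrac{1}{\mu(B_{k})}N_{k}(\lambda+\varepsilon_{k})$ with $M^{\pm}(\lambda)$ through the IDS limit; letting $k\to\infty$ and then $\varepsilon_{k}\to0$ (using monotonicity and right continuity of $M$) closes the estimate.

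There is a cleaner route whenever the average in~(2.6) is genuinely a trace: if $E_{\lambda}$ and $P_{L}$ lie in a von Neumann algebra $\mathcal{M}$ carrying a faithful normal tracial state $\tau$ with $\tau(P)=\mathrm{Average}(K_{P}(x,x))$ and $\tau(\mathrm{Id})=1$ --- as for periodic, almost periodic and self-similar examples, or regular covers with amenable deck group --- the proof is a direct transcription of minmax. The condition $\mathcal{R}|_{L}\le\lambda$ forces $P_{L}\wedge(\mathrm{Id}-E_{\lambda})=0$: a nonzero vector in that intersection would satisfy $\mathcal{R}\le\lambda$ and $\mathcal{R}\ge\lambda$, hence be an eigenvector at $\lambda$, which one may exclude or absorb into $E_{\lambda}$ by approximating in $\lambda$. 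Kaplansky's parallelogram identity $\tau(P)+\tau(R)=\tau(P\vee R)+\tau(P\wedge R)$ with $P=P_{L}$, $R=\mathrm{Id}-E_{\lambda}$ then yields $\tau(P_{L})\le1-\tau(\mathrm{Id}-E_{\lambda})=\tau(E_{\lambda})=M(\lambda)$, which is exactly the reverse inequality; in this setting the preceding conjecture is immediate, recovering the known monotonicity of the integrated density of states.

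The main obstacle is just what keeps this a conjecture in general: for an arbitrary metric measure space and an arbitrary exhaustion $B_{k}$, $\mathrm{Average}$ is only finitely subadditive rather than tracial, so the parallelogram identity is unavailable, and with only $\limsup$/$\liminf$ one must control $K_{L}$ and $K_{\lambda}$ along the same subsequences --- which the cutoff argument achieves only if the collar volumes are truly negligible, i.e. under a F\o lner-type condition $\mu(\partial_{\delta}B_{k})/\mu(B_{k})\to0$, if the competing notions of average coincide, and if the IDS convergence $\tfrac{1}{\mu(B_{k})}N_{k}(\lambda)\to M(\lambda)$ is in hand. I expect the conjecture to be provable precisely under such regularity of $\{B_{k}\}$ together with a mild uniformity of the kernels $K_{\lambda}(x,\cdot)$ at infinity, and to fail without it.
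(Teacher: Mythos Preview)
The statement you are asked to prove is presented in the paper as a \emph{conjecture} (Conjecture~2.3), not as a theorem; the paper gives no proof, only the remark that it ``would be easy to prove this conjecture in contexts where the integrated density of states definition~(\ref{eq:integrateddensityofstates}) is valid'' and that the maximum should be attained at $L=\mathrm{Range}\,E_{\lambda}$. So there is no paper proof to compare against, and your proposal should be read as an attack on an open problem rather than a rederivation.

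That said, your write-up is well aligned with what the paper has in mind. Your observation that $L=\mathrm{Range}\,E_{\lambda}$ gives the easy direction and pinpoints the candidate maximizer is exactly the content of the sentence following the conjecture. Your localization-by-cutoff argument is the natural fleshing-out of the paper's remark about the IDS definition, and your von~Neumann algebra/trace argument is a genuine strengthening in the settings where it applies (periodic, almost periodic, amenable covers), going beyond anything the paper states. You are also honest about the gap: without a F{\o}lner-type condition on $\{B_{k}\}$ and without knowing that the IDS limit $\tfrac{1}{\mu(B_{k})}N_{k}(\lambda)\to M(\lambda)$ actually holds and coincides with the spectral-mass average, neither the cutoff argument nor the trace argument closes. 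One further technical point you gloss over: comparing $\dim(\chi_{k}L)$ with $\int_{B_{k}}K_{L}(x,x)\,d\mu$ is not automatic, since multiplication by $\chi_{k}$ can have a large kernel on $L$ even when the collar is thin; controlling this requires some decay or concentration hypothesis on elements of $L$, which is part of the ``mild uniformity of the kernels'' you invoke at the end but do not make precise. In short, your proposal is a reasonable program, correctly identifies the special cases where the conjecture is accessible, and correctly isolates why it remains open in general --- which is all one can ask here, since the paper itself does not claim a proof.
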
with the maximum attained for $L= $ range $E_{\lambda}$.

\section{Homogeneous spaces}

In this section we discuss some examples in which there is a transitive group of symmetries actiong on $X$ that preserves the operator $A$. In such cases the function $K_{\lambda}(x,x)$ is independent of $x$, so the Average in (2.5) is just this constant value. 

\textbf{Example 3.1: The Euclidean Laplacian}

Let $A=-\Delta$ on $\RR^{n}$. Then 
\begin{equation}
K_{\lambda}(x,y)=\frac{1}{(2\pi)^{n}}\int_{{|\xi|\leq\sqrt{\lambda}}}e^{i(x-y)\cdot\xi}d\xi
\end{equation}

It is well-known how to express $K_{\lambda}(x,y)$ in terms of Bessel functions, but for our purposes it suffices to note that the integrand is identically one where $x=y$, so 
\begin{equation}
K_{\lambda}(x,x)=c_{n}\lambda^{n/2}\;\;\;\;\;\;\;\;\;\;\;\;\;\text{for}
\end{equation}
\begin{equation}
c_{n}=\frac{1}{(2\pi)^{n}}b_{n}
\end{equation}  where $b_{n}$ is the volume of the unit ball in $\RR^{n}$. The constant is the same one that appears in the Weyl asymptotic law, but in this case the asymptotic law is exact for all values of $\lambda$.

\textbf{Example 3.2: Lattice Graph Laplacian}

Let $X=\ZZ^{n}$, the $n$-dimensional cubic lattice, and let $A$ be the negative of the graph Laplacian
\begin{equation}
-\Delta f(m)=\sum_{i=1}^{n}(2f(m)-f(m+e_{i})-f(m-e_{i}))
\end{equation} where $e_{i}$ denotes the standard basis elements in $\ZZ^{n}$. (Note: it is sometimes conventional to divide by $2n$ in the definition, but this only changes the computations below by a constant.) The theory of Fourier series tells us that reasonable functions $f$ may be written 
\begin{equation}
f(m)=\int_{C}\sum_{k\in\ZZ^{n}}e^{2\pi i(m-k)\cdot\xi}f(k)d\xi
\end{equation} where $C$ denotes the cube $\{\xi:|\xi_{i}|\leq\frac{1}{2}\}$ in $\RR^{n}$, and the functions $m\to e^{2\pi m\cdot\xi}$ are eigenfunctions of $-\Delta$ with eigenvalue $\sum_{i=1}^{n}2(1-\cos2\pi\xi_{i})=\sum_{i=1}^{n}4\sin^{2}\pi\xi_{i}$. This shows that the spectrum of $-\Delta$ is $[0,4n]$ and the spectral projection operator $E_{\lambda}$ is obtained from (3.5) by restricting the integral to 
\begin{equation}
C\cap\left\{\sum_{i=1}^{n}4\sin^{2}\pi\xi_{i}\leq\lambda\right\}.
\end{equation} Thus the spectral mass function is the volume of the set (3.6). When $n=1$ this gives exactly 
\begin{equation}
M(\lambda)=\frac{2}{\pi}\sin^{-1}\left(\frac{\sqrt{\lambda}}{2}\right).
\end{equation} Note that this is asymptotic to $\frac{\sqrt{\lambda}}{\pi}$ as $\lambda\to0$.
For higher values of $n$ we do not have an exact formula for $M(\lambda)$, but for small values of $\lambda$ we may approximate $\sin^{2}\pi\xi_{i}$ by $(\pi\xi_{i})^{2}$, so
\begin{equation}
M(\lambda)\sim c_{n}\lambda^{n/2}\;\;\;\;\;\text{ as }\lambda\to0
\end{equation} for the same constant (3.3). In other words, the Weyl asymptotic law holds for the bottom of the spectrum.

\textbf{Example 3.3: The Hyperbolic Laplacian}

Let $A=-\Delta$ on $H^{n}$, the hyperbolic $n$-space, with metric $d(x,y)$. It is known that the spectrum is $\left[\left(\frac{n-1}{2}\right)^{2},\infty\right)$ and we can write
\begin{equation}
K_{\lambda}(x,y)=\int_{0}^{\sqrt{\lambda-\left(\frac{n-1}{2}\right)^{2}}}\varphi_{t}(d(x,y))dt
\end{equation} for an explicit spherical function $\varphi_{t}$ ([Ta]). For our purposes it suffices to know 
\begin{align}
\varphi_{t}(0)&=\frac{w_{n-1}}{(2\pi)^{n}}\left|\frac{\Gamma\left(\left(\frac{n-1}{2}\right)^{2}+it\right)}{\Gamma(it)}\right|^{2}\\
&=\begin{cases}
{\displaystyle\frac{w_{n-1}}{(2\pi)^{n}}\prod_{j=0}^{\frac{n-3}{2}}}(t^{2}+j^{2})&\text{$n$ odd}\\
{\displaystyle\frac{w_{n-1}}{(2\pi)^{n}}\;t\tanh\pi t\;\prod_{j=0}^{\frac{n-4}{2}}\left(t^2+\left(j+\frac{1}{2}\right)^{2}\right)}&\text{$n$ even}
\end{cases}\notag
\end{align} where $w_{n-1}$ denotes the surface measure of the unit sphere (so $w_{n-1}=nB_{n}$). Thus the spectral mass is exactly 
\begin{equation}
M(\lambda)=c_{n}n{\displaystyle\int_{0}^{\sqrt{\lambda-\left(\frac{n-1}{2}\right)^{2}}}}\left|\frac{\Gamma\left(\left(\frac{n-1}{2}\right)^{2}+it\right)}{\Gamma(it)}\right|^{2}dt.
\end{equation} Asymptotically we have
\begin{align}
M(\lambda)&=c_{n}n\int_{0}^{\sqrt{\lambda}}t^{n-1}dt+O\left(\lambda^{\frac{n-2}{2}}\right)\\
&=c_{n}\lambda^{\frac{n}{2}}+O(\lambda^{\frac{n-2}{2}})\;\;\;\;\;\;\;\;\;\text{as $\lambda\to\infty$.} \notag
\end{align} 
Note that the remainder term is smaller than expected. This may also be explained in terms of more refined Weyl asymptotic laws. For example, when $n=3$ the exact formula is $c_{3}(\lambda-1)^{\frac{3}{2}}$.

\textbf{Example 3.4: The Heisenberg SubLaplacian}

Let $\heis_{n}$ be $\CC^{n}\times \RR$ endowed with the group law
\begin{equation}
(z,t)\circ(z',t')=\left(z+z',t+t'-\frac{1}{2}\imm z\cdot \overline{z'}\right)
\end{equation} with left invariant vector fields
\begin{equation}
\begin{cases}
X_{j}={\displaystyle\frac{\partial}{\partial x_{j}}-\frac{1}{2}y_{j}\frac{\partial}{\partial t}}\\
Y_{j}={\displaystyle\frac{\partial}{\partial y_{j}}+\frac{1}{2}x_{j}\frac{\partial}{\partial t}}\\
T={\displaystyle\frac{\partial}{\partial t}}
\end{cases}
\end{equation} and let $A=-\mathfrak{L}$ for
\begin{equation}
\mathfrak{L}=\sum_{j=1}^{n}(X^{2}_{j}+Y_{j}^{2}),
\end{equation} the Heisenberg subLaplacian. The joint spectral resolution of the commuting operators $\mathfrak{L}$, $iT$ is known, and we may write
\begin{equation}
E_{\lambda}f=\sum_{\epsilon=\pm1}\;\;\sum_{k=0}^{\infty}\int_{0}^{\lambda}f*\varphi_{s,k,\epsilon}ds
\end{equation} where $*$ denotes the group convolution. Here $\varphi_{\lambda,k,\epsilon}$ is a the spherical function
\begin{equation}
\varphi_{\lambda,k,\epsilon}(z,t)=\frac{\lambda^{n}}{(2\pi)^{n}(n+2k)^{n+1}}\exp\left(\frac{-i\epsilon\lambda t}{n+2k}\right)\exp\left(\frac{-\lambda|z|^2}{4(n+2k)}\right)L_{k}^{n-1}\left(\frac{\lambda|z|^{2}}{2(n+2k)}\right)
\end{equation} and $L_{k}^{n-1}$ are the Laguerre polynomials characterized by the generating function identity
\begin{equation}
\sum_{k=0}^{\infty}r^{k}L_{k}^{n-1}(s)=(1-r)^{-n}e^{-\frac{rs}{1-r}}\;\;\;\;\text{for } |r|<1.
\end{equation}Thus the spectral mass is
\begin{align}
M(\lambda)&=\sum_{\epsilon=\pm1}\;\;\sum_{k=0}^{\infty}\int_{0}^{\lambda}\varphi_{s,k,\epsilon}(0,0)ds\\
&=2\sum_{k=0}^{\infty}\int_{0}^{\lambda}\frac{s^{n}}{(2\pi)^{n+1}(n+2k)^{n+1}}L_{k}^{n-1}(0)ds.\notag
\end{align} Note that (3.18) implies 
\begin{equation}
L_{k}^{n-1}(0)=\frac{1}{k!}\left(\frac{d}{dr}\right)^{k}(1-r)^{-n}|_{r=0}=\binom{n+k-1}{k}
\end{equation} and so
\begin{equation}
M(\lambda)=\left(\frac{2}{(n+1)(2\pi)^{n+1}}\sum_{k=0}^{\infty}\frac{1}{(n+2k)^{n+1}}\binom{n+k-1}{k}\right)\lambda^{n+1}
\end{equation} It is clear that the infinite series converges since the terms are $O(k^{-2})$. It appears likely that one can evaluate the constant in (3.21) in terms of the values of the zeta function for even integers. For example, when $n=1$ it is
\begin{equation}
\frac{1}{(2\pi)^{2}}\sum_{k=0}^{\infty}\frac{1}{(1+2k)^{2}}=\frac{1}{(2\pi)^{n}}\left(\zeta(2)-\frac{1}{4}\;\zeta(2)\right)=\frac{1}{(2\pi)^{2}}\frac{\pi^{2}}{8}=\frac{1}{32}
\end{equation} and for $n=2$ it is
\begin{equation}
\frac{2}{3(2\pi)^{3}}\sum_{k=0}^{\infty}\frac{k+1}{(2+2k)^{3}}=\frac{2}{3(2\pi)^{3}}\frac{1}{8}\zeta(2)=\frac{1}{576\pi}.
\end{equation}

\textbf{Example 3.5: Laplacian on Homogeneous Trees}

Let $T_{q}$ denote the homogeneous tree where each vertex has $q+1$ distinct neighbors, with $q>1$, and let $A=-\Delta$ where 
\begin{equation}
\Delta f(x)=\sum_{y\sim x}(f(y)-f(x)).
\end{equation} A priori, the spectrum of $A$ lies in $[0,2(q+1)]$, but in fact it turns out to be the smaller interval $[q+1-2\sqrt{q},q+1+2\sqrt{q}]$. The spectral resolution is due to Cartier [Ca] and may be found in [F-TN].

Define the $c$-function by
\begin{equation}
c(z)=\left(\frac{1}{q+1}\right)\frac{q^{1-z}-q^{z-1}}{q^{-z}-q^{z-1}}\;\;\;\text{for }z\in\CC
\end{equation} and the spherical functions
\begin{equation}
\phi_{z}(x)=c(z)q^{-zd(x)}+c(1-z)q^{-(1-z)d(x)}
\end{equation} where $d(x)$ denotes the graph distance to some fixed reference vertex $x_{0}\in T$. The spherical function (3.26) is an eigenfunction of $A$ with eigenvalue
\begin{equation}
\lambda=q+1-(q^{z}+q^{1-z}).
\end{equation} It turns out that the only contribution to the spectral resolution comes from $\ree z=\frac{1}{2}$, and we will write $z=\frac{1}{2}+it$ for $0\leq t\leq\frac{\pi}{\log q}$, so (3.27) becomes 
\begin{equation}
\lambda = q+1-2\sqrt{q}\cos(t\log q).
\end{equation}

The full spectral resolution may be written
\begin{equation}
f(x)=\int_{0}^{\frac{\pi}{\log q}}\phi_{\frac{1}{2}+it}*f(x)dm(t)
\end{equation} where the convolution is
\begin{equation}
\phi_{\frac{1}{2}+it}*f(x)=\sum_{y}\phi_{\frac{1}{2}+it}(d(x,y))f(y)
\end{equation} and
\begin{equation}
dm(t)=\frac{q\log q}{2\pi(q+1)}\left|c\left(\frac{1}{2}+it\right)\right|^{2}dt=\frac{4q(q+1)\log q\sin^{2}(t\log q)}{2\pi\left((q-1)^{2}+4q\sin^{2}\left(t\log q\right)\right)}\;dt.
\end{equation} The spectral projection $E_{\lambda}$ is obtained by restricting the integral on the right side of (3.29) to the values of $t$ with $\lambda\leq t$, in other words
\begin{equation}
0\leq t\leq \frac{1}{\log q}\cos^{-1}\left(\frac{q+1-\lambda}{2\sqrt{q}}\right).
\end{equation} The definition of the $c$-function was chosen to have $c(z)+c(z+1)=1$ so that $\varphi_{z}(0)=1$, so
\begin{equation}
K_{\lambda}(x,x)=\int_{I_{\lambda}}dm(t)
\end{equation} where $I_{\lambda}$ is the interval (3.32), and this is the spectral mass function. Note that $M(\lambda)=0$ for $\lambda\leq q+1-2\sqrt{q}$ (below the spectral gap), and just above the spectral gap $\lambda=q+1-2\sqrt{q}+\epsilon$ for small $\epsilon$,
\begin{align}
M(q+1-2\sqrt{q}+\epsilon)&\approx\int_{0}^{\frac{\epsilon^{1/2}}{q^{1/4}}}\frac{2q(q+1)(\log q)^{3}}{\pi(q-1)^{2}}\;t^{2}dt\\
&=\frac{2q^{1/4}(q+1)(\log q)^{3}}{3\pi(q-1)^{2}}\;\epsilon^{3/2}+O(\epsilon^{5/2}).
\end{align}

\section{Inhomogeneous spaces}

In this section we discuss examples of spaces that are not homogeneous, forcing us to come to grips with the notion of ``Average'' in (2.5).

\textbf{Example 4.1: The half-line} 

Let $X$ be the half-line $[0,\infty]$ and $A$ be $-\frac{d^{2}}{dx}$ with either Neumann or Dirichlet boundary conditions at the origin. Then $E_{\lambda}$ corresponds to $E_{\lambda}$ on the full line with the function extended by even or odd reflections. In other words,
\begin{equation}
K_{\lambda}(x,x) =\widetilde{K}_{\lambda}(x,x)\pm \widetilde{K}_{\lambda}(x,-x)
\end{equation} where $\widetilde{K}_{\lambda}$ is the kernel for the full line (Example 3.1 with $n=1$). Explicitly,
\begin{equation}
\widetilde{K}_{\lambda}(x,y)=\frac{1}{\pi}\frac{\sin\sqrt{\lambda}(x-y)}{x-y}.
\end{equation} The obvious definition of ``Average'' here is 
\begin{equation}
M(\lambda)=\lim_{R\to\infty}\frac{1}{R}\int_{0}^{R}K_{\lambda}(x,x)dx.
\end{equation} Since $\widetilde{K}_{\lambda}(x,-x)=\frac{1}{2\pi}\frac{\sin\sqrt{\lambda}x}{x}$ tends to zero as $x\to\infty$, it clearly does not contribute to (4.3) and so we obtain the same result $c_{1}\sqrt{\lambda}$ as for the full line.

\textbf{Example 4.2: The half-space}

Let $X=\RR^{n}\times[0,\infty)$ and $A$ be $-\Delta$ with either Neumann or Dirichlet conditions on the boundary. If we write $x'=(x_{1},\ldots,x_{n})\in\RR^{n}$ and $0\leq x_{n+1}<\infty$ then the analog of (4.1) is
\begin{align}
K_{\lambda}((x',x_{n+1}),(x',x_{n+1}))=\widetilde{K}_{\lambda}((x',x_{n+1}),(x',x_{n+1}))\pm\widetilde{K}_{\lambda}((x',x_{n+1}),(x',-x_{n+1}))
\end{align} where $\widetilde{K}_{\lambda}$ denotes the kernel for $\RR^{n+1}$. It is well-known that $\widetilde{K}_{\lambda}$ may be expressed in terms of Bessel functions and $\widetilde{K}_{\lambda}((x',x_{n+1}),(x',-x_{n+1}))$ tends to zero as $x_{n+1}\to\infty$. So if we define ``Average'' in the expected way,
\begin{equation}
M(\lambda)=\lim_{R\to\infty}\frac{2}{B_{n+1}R^{n+1}}\int_{|x'|^{2}+|x_{n+1}|^{2}\leq R^{2}}{K}_{\lambda}((x',x_{n+1}),(x',x_{n+1}))dx'dx_{n+1}
\end{equation} then $\widetilde{K}_{\lambda}((x',x_{n+1}),(x',-x_{n+1}))$ in (4.4) does not contribute to the limit in (4.5), so we get the same answer $c_{n+1}\lambda^{\frac{n+1}{2}}$ as in the case of $\RR^{n+1}$. There are many other ways we could define the ``Average'' that would lead to the same answer. On the other hand one could imagine some very perverse definitions that would take averages over regions that eventually exhaust the half-space but unduly weight points close to the boundary, and lead to different answers (or even nonexistence of limits).

\textbf{Example 4.3: Disjoint Cubes}

Let $\{C_{j}\}$ be an infinite sequence of open cubes in $\RR^{n}$ whose closures are disjoint, let $X=\bigcup_{j}C_{j}$ and let $A$ be the restriction of $-\Delta$ to each cube with either Neumann or Dirichlet boundary conditions on the boundary of each cube. This is not usually considered an interesting example since $X$ is not connected, but it will enable us to develop some insight that may be carried over to more interesting examples. Let $s_{j}$ denote the side length of $C_{j}$, so $\vol(C_{j})=s_{j}^{n}$.

To begin we make the following assumptions

(i) $\sum_{j}s^{n}_{j}=+\infty$ so $X$ has infinite volume, but

(ii) there exists $\epsilon>0$ such that $\sum_{s_{j}\leq\epsilon}s^{n}_{j}<\infty$, so that all sufficiently small cubes make only a finite contribution to the volume.

We define ``Average'' as the limit of averages over $\bigcup_{j=1}^{N}C_{j}$ as $N\to\infty$, but in fact this limit may not exist, so we define upper and lower averages
\begin{equation}
M^{+}=\limsup_{N\to\infty}\frac{1}{\sum_{j=1}^{N}s_{j}^{n}}\sum_{j=1}^{N}\int_{C_{j}}K_{\lambda}(x,x)dx
\end{equation} and $M^{-}(\lambda)$ with $\limsup$ replaced by $\liminf$. 
Now the projection operators $E_{\lambda}$ are given on each $C_{j}$ as just the standard Neumann or Dirichlet Laplacian projection operators on $C_{j}$, so in fact
\begin{equation}
\int_{C_{j}}K_{\lambda}(x,x)dx=N_{j}(\lambda)
\end{equation} where $N_{j}(\lambda)$ is the eigenvalue counting function on $C_{j}$.

We know that 
\begin{equation}
N_{j}(\lambda)=c_{n}s_{j}^{n}\lambda^{n/2}+\epsilon_{j}(\lambda)s_{j}^{n}\lambda^{n/2}
\end{equation} where the error term $\epsilon_{j}(\lambda)$ tends to zero as $\lambda\to\infty$. However, the error estimate is not uniform across all cubes. The two observations we need are that in any cases the error term $\epsilon_{j}(\lambda)$ is uniformly bounded for all cubes, and the vanishing $\epsilon_{j}(\lambda)$ as $\lambda\to\infty$ is uniform on all cubes with $s_{j}\geq\epsilon$ (for example, for any fixed $\lambda$, if $s_{j}$ is small enough then $N_{j}(\lambda)=1$ (Neumann) or 0 (Dirichlet)). The first observation together with assumptions (i) and (ii) mean that the small cubes with $s_{j}\leq\epsilon$ do not contribute to the limit defining $M^{\pm}(\lambda)$. The second observation then yields estimates
\begin{equation}
c_{n}\lambda^{n/2}-\epsilon^{-}(\lambda)\lambda^{n/2}\leq M^{-}(\lambda)\leq M^{+}(\lambda)\leq c_{n}\lambda^{n/2}+\epsilon^{+}(\lambda)\lambda^{n/2}
\end{equation} where $\epsilon^{\pm}\to0$ as $\lambda\to\infty$. Thus (4.9) is our version of Weyl asymptotics in this case. (Better estimates are possible, but we will not discuss them here.)

On the other hand, if we violate assumption (ii) then we can end up with essentially meaningless results. Suppose, for example, that (i) holds but $\lim_{j\to\infty}s_{j}=0$. Then for a fixed $\lambda$ we will have all but a finite number of cubes too small to support nonconstant eigenfunctions with eigenvalues $\leq\lambda$. Thus $N_{j}(\lambda)=0$ for Dirichlet boundary conditions for all but a finite number of $j$'s, so 
\begin{equation}
M^{+}(\lambda)\leq\limsup_{N\to\infty}\frac{c}{\sum_{j=1}^{N}s_{j}^{n}}=0.
\end{equation} For Neumann boundary conditions with $N_{j}(\lambda)=1$ for all but a finite number of $j$'s we have
\begin{equation}
M^{-}(\lambda)\geq\liminf_{N\to\infty}\frac{N}{\sum_{j=1}^{N}s_{j}^{n}}=+\infty.
\end{equation}

\textbf{Example 4.4: Open subsets of $\RR^{n}$}

Let $X$ be an unbounded open subset of $\RR^{n}$ with smooth boundary, and consider $-\Delta$ with either Dirichlet or Neumann boundary conditions. Under some additional assumptions we expect to obtain the estimate (4.9). This is weaker than the result for the half-space in Example 4.2, so again we could hope for the stronger conclusion under more stringent hypotheses. This is likely to be a long term project, so we restrict ourself here to outlining three prospective approaches to the problem:

(a) If Conjecture 2.3 is valid, then we can try to find unions of cubes bracketing $X$, namely 
\begin{equation}
\bigcup_{j}C_{j}\subseteq X\subseteq\bigcup_{j}C'_{j}
\end{equation} and use the estimate (4.9) for Example 4.3. For the inner approximation we can arrange for the closures of $C_{j}$ to be disjoint, but not for outer approximation, so this would require a minor adjustment of the argument (but this is no different from the case of bounded domains). The main drawback  to this approach is verifying condition (ii). For example, if $X$ is the region above the graph of a smooth function, then we would need to assume that the function rapidly approaches a constant at infinity to guarantee that small cubes have finite total volume. It is likely that a more careful argument could allow a weakening of condition (ii) to allow a more reasonable class of domains. Since Conjecture 2.3 is needed to pass from elementary Rayleigh quotient estimates that follow from (4.12) to estimate for spectral mass, it is not worth working out the details of this approach until the status of the conjecture is resolved.

(b) Heat kernel estimates of the form (2.13) and (2.14) are known to hold, so we can use these to obtain estimates (4.9). The only technical problem is to find the hypothesis on $X$ that will make the constants in (2.13) and (2.14) uniform across the unbounded set $X$. This approach is too cride to yield the more precise asymptotics that we found for the halfspace.

(c) Wave equation methods, as described in [CGT], [L], [SV1], [SV2] can be used to compare functions of the Laplacian on the ambient Euclidean space. There are two technical issues here. The first is that we cannot use the spectral projection operator directly because the sharp cutoff at frequency $\lambda$ is implemented by $\psi(-\Delta)$, where $\psi$ is a discontinuous function whose Fourier transform decays too slowly at infinity. So we have to bracket $E_{\lambda}$ by 
\begin{equation}
\psi_{1}(-\Delta)\leq E_{\lambda}\leq \psi_{2}(-\Delta)
\end{equation} where $\psi_{1}$ and $\psi_{2}$ are soft cutoff functions in $\mathcal{D}$. We may take $\psi_{j}$ to be even functions so
\begin{equation}
\psi_{j}(s)=\int_{0}^{\infty}\widehat{\psi}(t)\cos (st)dt\;\;\;\;\;\text{ and}
\end{equation} 
\begin{equation}
\psi_{j}(-\Delta)=\int_{0}^{\infty}\widehat{\psi}_{j}(t)\cos(t\sqrt{-\Delta})dt
\end{equation} represents $\psi_{j}(-\Delta)$ as an integral of wave propagator operators $\cos(t\sqrt{-\Delta})$. Because of the finite propagation speed of $\cos(t\sqrt{-\Delta})$ we have the equality of the kernel $\cos(t\sqrt{-\Delta})$ for the Laplacians $\Delta$ on $X$ and $\widetilde{\Delta}$ on $\RR^{n}$ as long as we stay a distance of $t$ away from the boundary. Let us denote by $K_{\lambda}^{(j)}$ the kernel of the operator $\psi_{j}(-\Delta)$, and by $\widetilde{K}_{\lambda}^{(j)}$ the kernel of the operator $\psi_{j}(-\widetilde{\Delta})$, and by $W_{t}$ and $\widehat{W}_{t}$ the kernels of $\cos(t\sqrt{-\Delta})$ and $\cos (t\sqrt{-\widetilde{\Delta}})$. Then (4.13) gives 
\begin{equation}
K_{\lambda}^{(1)}(x,x)\leq K_{\lambda}(x,x)\leq K_{\lambda}^{(2)}(x,x)
\end{equation} while (4.15) gives
\begin{align}
K_{\lambda}^{(j)}(x,x)&=\left.\int_{0}^{\infty}\widehat{\psi}_{j}(t)W_{t}(x,y)dt\right|_{y=x}\\
\widetilde{K}_{\lambda}^{(j)}(x,x)&=\left.\int_{0}^{\infty}\widehat{\psi}_{j}(t)\widetilde{W}_{t}(x,y)dt\right|_{y=x}
\end{align}(since the kernels $W_{t}(x,y)$ are singular at $y=x$ we must first integrate before setting $y=x$). The finite propagation speed implies
\begin{equation}
K_{\lambda}^{(j)}(x,x)-\widetilde{K}_{\lambda}^{(j)}(x,x)=\left.\int_{d(x)}^{\infty}\widehat{\psi}_{j}(t)(W_{t}(x,y)-\widetilde{W}_{t}(x,y))dt\right|_{y=x},
\end{equation} where $d(x)$ denotes the distance to the boundary. Since $\widehat{\psi}_{j}$ vanishes rapidly at infinity, it should be possible to find a favorable estimate for the right side of (4.19), under suitable assumptions on $X$, using the method of [CGT]. This remains a technical issue. Since $\widetilde{K}_{\lambda}^{(j)}(x,x)$ is independent of $x$ and has the desired $O(\lambda^{n/2})$ asymptotics, (4.16) would then yield (4.9). Once again there is no hope of getting the existence of $M(\lambda)$ form the estimates (4.16).

\textbf{Example 4.15: The hyperbolic half-space}

Consider half of $H^{n}$ (Example 3.3) with either Dirichlet or Neumann boundary conditions (in the Poincar\'e ball model of $H^{n}$ this would be a half ball). By the same reasoning as in Example 4.2 we will have exactly the same asymptotics, in this case (3.12) as for the full hyperbolic space.

More generally we could consider other unbounded domains in $H^{n}$ using methods (b) and (c) sketched in Example 4.4. Even more generally we could look at Riemannian manifolds with positive injectivity radius and positive curvature bounds. It is not clear if we would require two-sided or only one-sided curvature bounds, and whether Ricci curvature bounds would suffice.

\section{Finite symmetry groups}

Suppose the space $X$  has a finite group $G$ of symmetries ($x\to gx$ for each $g\in G$) that preserves the measure $\mu$ and the operator $A$ ($Af(gx)=A(f_{g})(x)$ for $f_{g}(x)=f(gx)$). Let $\{\pi_{j}\}$ be a complete set of inequivalent irreducible representations of dimensions ${d_{j}}$. We know the dimension formula says $\sum_{j}d_{j}^{2}=\# G$. We may also split the span $L^{2}(X)$ into an orthogonal direct sum $\oplus_{j}L^{2}_{j}(X)$ where $L^{2}_{j}$ consists of functions that transform according to the representation $\pi_{j}$. In other words, $f\in L^{2}_{j}$ if $\{f_{g}\}_{g\in G}$ spans a space on which the action of $G$ is equivalent to $\pi_{j}$. We may then define spectral mass functions $M_{j}(\lambda)$ by restricting the spectral projections $E_{\lambda}$ to the spaces $L_{j}(X)$. Clearly
\begin{equation}
M(\lambda)=\sum_{j}M_{j}(\lambda),
\end{equation} and a natural question that arises is what can be said about the ratios $M_{j}(\lambda)/M(\lambda)$. A simple answer suggests itself:

\textbf{Asymptotic splitting law heuristic}
\begin{equation}
\frac{M_{j}(\lambda)}{M(\lambda)}\to\frac{d_{j}^{2}}{\# G}\;\;\;\;\;\text{ as }\lambda\to\infty.
\end{equation} 

This heuristic is certainly not universally true. An obvious necessary condition is that most orbits under the action of $G$ be full orbits (have the same  cardinality as $G$) since orbits that are not full will not support functions that transform according to all irreducible representations of $G$ (in other words, some of the spaces $L_{j}^{2}$ will be too small). But even this condition is not sufficient, as the following simple example shows. Suppose $X$ is the 3-regular graph consisting of two copies of $\ZZ_{N}$, $\{x_{0},x_{1},\ldots, x_{N-1},x_{N}=x_{0}\}$ and $\{y_{0},y_{1},\ldots,y_{N-1},y_{N}=y_{0}\}$ with $x_{j}\sim x_{j-1}$, $x_{j}\sim x_{j+1}$, $y_{j}\sim y_{j-1}$, $y_{j}\sim y_{j+1}$ and $x_{j}\sim y_{j}$. Let $G$ be the two element group consisting of the identity and the permutation $x_{j}\longleftrightarrow y_{j}$ for all $j$. Clearly the two spaces $L_{0}^{2}(X)$ and $L_{1}^{2}(X)$ consist of the even functions $f(x_{j})=f(y_{j})$ and the odd functions $f(x_{j})=-f(y_{0})$. If $A$ is the standard negative Laplacian 
\begin{equation}
\begin{cases}
Af(x_{j})=	3f(x_{j})-f(x_{j-1})-f(x_{j+1}) -f(y_{j})\\
Af(y_{j})=3f(y_{j})-f(y_{j-1})-f(y_{j+1}) -f(x_{j})
\end{cases}
\end{equation} then it is easy to see that 
\begin{equation}
\begin{cases}
f(x_{j})=e^{2\pi ijk/N}\\
f(y_{j})=\pm e^{2\pi ijk/N}
\end{cases}
\end{equation} are the even (+) and odd (-) eigenfunctions with eigenvalues
\begin{equation}
\begin{cases}
2-2\cos\frac{2\pi k}{N} &\text{even}\\
4-2\cos\frac{2\pi k}{N} &\text{odd}
\end{cases}
\end{equation} (the multiplicites are 2 unless $k=0$ or $k=\frac{N}{2}$). Note that the even spectrum lies in $[0,4]$ and the odd spectru, lies in $[2,6]$. In other words, the bottom third of the spectrum consists entirely of even spectrum, the top third consists entirely of odd spectrum, and only in the middle have do the two interweave. Although this is a finite example, so the limit in (5.12) is meaningless, it clearly violates the spirit of the heuristic and may be used to construct actual counterexamples.

It is easy to see why this example misbehaves, and then to formulate conditions that rule out this sort of problem. If we split $X$ into two natural fundamental domains for the action of $G$, namely the two copies of $\ZZ_{N}$, there are too many edges connecting them. What we want instead is a subdivision into fundamental domains with relatively few edges connecting them. We consider first the case of Laplacians on finite graphs.

We say that $F\subset X$ is a fundamental domain for the $G$-action on $X$ if
\begin{equation}
X=\bigcup_{g\in G}gF
\end{equation} We do not insist that the images $gF$ be disjoint, but of course we want the overlaps to be small. We define the \emph{boundary} $\partial F$ to consist of all points in $F$ that either lie in, or have an edge connecting to some $gF$ with $g$ not the identity.

Note that $\#\partial P$ may vary depending on the choice of the fundamental domain, we would like to make a choice that gives close to the smallest possible value.

\begin{lem} Let $A$ be any $G$-invariant Laplacian on $L^{2}(X,\mu)$ for a finite graph that is nonnegative and self-adjoint, specifically
\begin{equation}
Au(x)=\frac{1}{\mu(x)}\sum_{y\sim x}c_{xy}(u(x)-u(y))
\end{equation} for some positive conductances $c_{xy}$. Then
\begin{equation}
\left|M_{j}(\lambda)-\frac{d^{2}_{j}}{\# G} M(\lambda)\right|\leq \frac{d^{2}_{j}\# \partial F}{\mu(X)}
\end{equation}
\end{lem}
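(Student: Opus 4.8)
The plan is to compare the spectral mass $M(\lambda)$ computed on $X$ with a spectral mass computed from the $G$-invariant part of the spectral projection restricted to a single fundamental domain $F$, and to recognize that on the interior of $F$ the relevant kernels agree. First I would set $N_j(\lambda) = \mu(X) M_j(\lambda) = \operatorname{tr}(E_\lambda|_{L^2_j})$, so that $N(\lambda) = \sum_j N_j(\lambda)$ and by symmetry each $N_j$ counts eigenvalues with the multiplicity weighting coming from $\pi_j$. Using orthogonality of characters, I would express $N_j(\lambda) = d_j \sum_{g \in G} \overline{\chi_j(g)} \cdot \tfrac{1}{\#G}\operatorname{tr}(g \circ E_\lambda)$, where $g$ acts on $L^2(X,\mu)$ by $f \mapsto f_g$; the point is that $\operatorname{tr}(g \circ E_\lambda) = \sum_{x : gx = x}\mu(x) K_\lambda(x, gx)$ localizes onto the fixed-point set of $g$.

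Next I would isolate the $g = e$ term: it contributes $\tfrac{d_j^2}{\#G} N(\lambda)$ to $N_j(\lambda)$, which is exactly the main term in the claimed inequality after dividing by $\mu(X)$. So the whole content is to bound $\left| N_j(\lambda) - \tfrac{d_j^2}{\#G} N(\lambda) \right| = d_j \left| \sum_{g \neq e} \overline{\chi_j(g)} \cdot \tfrac{1}{\#G}\sum_{x : gx = x}\mu(x) K_\lambda(x,gx) \right|$. The key observation is that if $x \in X \setminus \partial F$ — a genuinely interior point of the fundamental domain — then $x$ and all its translates $gx$ for $g \neq e$ lie in distinct copies $gF$ with no edge between $x$ and $gx$; since the operators $g \circ E_\lambda$ are built from $A$, which only couples adjacent vertices, I would argue that the matrix entry of $g\circ E_\lambda$ relevant at such an interior point cancels appropriately in the alternating sum, so only points of $\partial F$ survive. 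Then I would estimate $|K_\lambda(x,y)| \le 1$ (from the Cauchy–Schwarz consequence $|K_\lambda(x,y)|^2 \le K_\lambda(x,x) K_\lambda(y,y)$ together with $\mu(x) K_\lambda(x,x) \le 1$, since a single spectral projection entry of a finite orthonormal eigenfunction system is bounded by the reciprocal measure), bound $|\chi_j(g)| \le d_j$, and count at most $\#\partial F$ surviving points among at most $\#G$ group elements, yielding the bound $d_j^2 \#\partial F$ after the $\tfrac{1}{\#G}$ and the sum over $g$ combine.

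The main obstacle I anticipate is making precise the cancellation claim at interior points — showing that contributions from $x \notin \partial F$ drop out of $N_j(\lambda) - \tfrac{d_j^2}{\#G}N(\lambda)$. The clean way to see this is to note that $E_\lambda|_{L^2_j}$ restricted to functions supported away from $\partial F$ is unitarily equivalent, via averaging over $G$-orbits, to $d_j$ copies of the spectral projection of the ``folded'' Laplacian on $F$ with appropriate boundary identifications, and on the interior this folded operator is literally $A$; hence the diagonal of its kernel matches that of $\tfrac{1}{\#G} \sum_g g\circ E_\lambda$ at interior points, contributing precisely the $\tfrac{d_j^2}{\#G}$ proportion there. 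Once this local matching is established, the estimate is just bounding the boundary terms crudely, which is routine. I would also need to be slightly careful that $\#\partial F$ as defined (points of $F$ lying in or adjacent to some nontrivial translate) correctly captures every vertex where the interior matching can fail, but this follows directly from the definition combined with the fact that $A$ in \eqref{...} couples only neighbors.
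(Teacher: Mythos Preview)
Your character-formula approach is genuinely different from the paper's proof, which works entirely through the minmax characterization of eigenvalues and a Dirichlet count $N^{(0)}(\lambda)$ on the fundamental domain $F$. However, your execution has a real gap.

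The identity $N_j(\lambda)=\dfrac{d_j}{\#G}\sum_{g}\overline{\chi_j(g)}\operatorname{tr}(gE_\lambda)$ is correct, and isolating the $g=e$ summand does produce the main term $\dfrac{d_j^2}{\#G}N(\lambda)$. But your claim that $\operatorname{tr}(gE_\lambda)$ ``localizes onto the fixed-point set of $g$'' is simply false: on a finite graph
\[
\operatorname{tr}(gE_\lambda)=\sum_{x\in X}K_\lambda(x,gx)\,\mu(x)
\]
is a sum over \emph{all} vertices, and the off-diagonal values $K_\lambda(x,gx)$ do not vanish when $gx\neq x$. Your subsequent argument that interior points drop out because ``$A$ only couples adjacent vertices'' conflates locality of $A$ with locality of its spectral projection; on a finite graph $E_\lambda$ is a polynomial in $A$ and has no locality at all. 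There is no visible cancellation mechanism in $\sum_{g\neq e}\overline{\chi_j(g)}K_\lambda(x,gx)$ for interior $x$, and the crude bound $|K_\lambda(x,gx)|\mu(x)\leq 1$ only gives $|\operatorname{tr}(gE_\lambda)|\leq \#X$, which is far too weak.

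Your last paragraph moves toward the correct idea --- that functions in $\mathcal{D}_0=\{u:u|_{\partial F}=0\}$ extend to $L^2_j$ in $d_j^2$ independent ways with identical Rayleigh quotient --- but that is exactly the paper's argument, and it runs through minmax, not through diagonal kernel identities. The paper uses this observation to sandwich
\[
d_j^2 N^{(0)}(\lambda)\;\leq\; N_j(\lambda)\;\leq\; d_j^2\bigl(N^{(0)}(\lambda)+\#\partial F\bigr),
\]
sums over $j$ to get the analogous bounds for $N(\lambda)$, and eliminates $N^{(0)}(\lambda)$. To rescue your trace approach you would need an independent estimate $|\operatorname{tr}(gE_\lambda)|\leq c\,\#\partial F$ for $g\neq e$, and nothing in your outline supplies one.
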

\begin{proof}
We may work with $N(\lambda)$ and $N_{j}(\lambda)$ (defined in the obvious way) since $M(\lambda)=N(\lambda)/\mu(X)$, etc. Form the minmax characterization of eigenvalues we have
\begin{equation}
N(\lambda) =\max\{\dim L:\mathcal{R}|_{L}\leq \lambda\}
\end{equation} where 
\begin{equation}
\mathcal{R}(u)=\frac{{\displaystyle\sum_{x\sim y}c_{xy}|u(x)-u(y)|^{2}}}{{\displaystyle\sum_{x}|u(x)|^{2}\mu(x)}}
\end{equation} denotes the Rayleigh quotient and $L$ denotes a subspace of $L^{2}(X)$. Similarly
\begin{equation}
N_{\lambda}(y)=\max\{\dim L:L\subset L^{2}_{j}(X) \text{ and }\mathcal{R}|_{L}\leq\lambda\}.
\end{equation} Let $\mathcal{D}_{0}$ denote the Dirichlet domain on $F$, namely 
\begin{equation}
\mathcal{D}_{0}=\{u\in L^{2}(F):u|_{\partial F}=0\}
\end{equation} and let
\begin{equation}
N^{(0)}(\lambda)=\max\{\dim L: L\subset\mathcal{D}_{0}\text{ and }\mathcal{R}|_{L}\leq\lambda\}.
\end{equation} The key observation is that for each $u\in\mathcal{D}_{0}$ and each representation $\pi_{j}$, there are $d_{j}^{2}$ linearly independent extensions of $u$ to $X$ and the Rayleigh quotient of each extension is the same as $\mathcal{R}(u)$. This immediately yields the estimates 
\begin{equation}
d_{j}^{2}N^{(0)}(\lambda)\leq N_{j}(\lambda)\leq d_{j}^{2}(N^{(0)}(\lambda)+\#\partial F)
\end{equation} and summing over $j$ we obtain 
\begin{equation}
\# GN^{(0)}(\lambda)\leq N(\lambda)\leq \# G(N^{(0)}(\lambda)+\#\partial F).
\end{equation} Combining (5.14) and (5.15) yields 
\begin{equation}
\left|N_{j}(\lambda)-\frac{d^{2}_{j}}{\# G}\right|\leq d_{j}^{2}\#\partial F,
\end{equation} and (5.8) follows from (5.16) by dividing by $\mu(X)$.
\end{proof}

Although the lemma only deals with finite graphs, it does show that it is not necessary to go to very large values of $\lambda$ in order to get desired splitting to hold with small error; all that is needed is that $\frac{\#\partial F}{\mu(X)}$ be small in comparison to $M(\lambda)$.

\textbf{Example 5.1: Sierpinski gasket}

The Sierpinski gasket ($SG$) fractal in an equilateral triangle has the dihedral symmetry group $D_{3}$ of the triangle acting on it, and the group preserves the Kigami Laplacian ([Ki],[S2]). Since the spectrum of this Laplacian is known in great detail [GRS] it is possible to verify the asymptotic splitting law heuristic directly. However, Lemma 5.1 together with the method of spectral decimation ([FS]) gives the result with a very small error estimate. SG is the invariant set for the iterated function system (IFS) consisting if the three contraction mappings $F_{i}(x)=\frac{1}{2}x+\frac{1}{2}q_{i}$, where $\{q_{i}\}$ are the vertices of the equilateral triangle,
\begin{equation}
SG=\bigcup_{i}F_{i}(SG).
\end{equation} We construct a sequence of graphs $\Gamma_{m}$ that approximate SG as follows: $\Gamma_{0}$ is the complete graph on the vertices $\{q_{i}\}$, and inductively 
\begin{equation}
\Gamma_{m}=\bigcup_{i}F_{i}(\Gamma_{m-1}).
\end{equation} Treating $\{q_{i}\}$ as the boundary of $SG$ and each graph, we note that all the other verices of $\Gamma_{m}$ have degree 4, and so we may define a graph Laplacian
\begin{equation}
\Delta_{m}u(x)=\sum_{y\stackrel{m}{\sim}x}(u(y)-u(x))
\end{equation} at nonboundary points $x$ with either Dirichlet or Neumann boundary conditions, and this is self-adjoint for the counting measure $\mu$. The Kigami Laplacian on $SG$ is a renormalized limit of these group Laplacians,
\begin{equation}
\Delta u=\frac{3}{2}\lim_{m\to\infty} 5^{m}\Delta_{m}u
\end{equation}

The method of spectral decimation says that the spectrum of $\Delta$ is a limit of the spectra of $\Delta_{m}$ in a very precise sense: for any bottom segment $[0,\lambda]$ of  the spectrum of $\Delta$ there exists $m\approx\left(\frac{\log\lambda}{\log 5}\right)$ such that the eigenfunctions restricted to $\Gamma_{m}$ are eigenfunctions of $\Delta_{m}$ and fill out the bottom segment $[0,\lambda']$ of the spectrum of $\Delta_{m}$ (here $\lambda'\approx 5^{m}\lambda$). Since the action of the dihedral group $D_{3}$ preserves all these structures, we have $N(\lambda)=N^{(m)}(\lambda')$ and $N_{j}(\lambda)=N_{j}^{(m)}(\lambda')$ for the three representations $\pi_{0}=$ trivial representation with $d_{0}=1$, $\pi_{1}=$ alternating representation with $d_{1}=1$ and $\pi_{2}=$ 2-dimensional representation with $d_{2}=2$ (note $1^{2}+1^{2}+2^{2}=6=\# D_{3}$). So the ratio $N_{j}(\lambda)/N(\lambda)$ on $SG$ is exactly equal to the ratio $N^{(m)}_{j}(\lambda')/N^{(m)}(\lambda)$ on the graph $\Gamma_{m}$, and we may use (5.16) to estimate the difference from $d_{j}^{2}/6$. We note that the natural fundamental domain $F$ (either for $SG$ or the graphs) is just $1/6$ of the triangle bounded by the perpendicular bisectors of the triangle. Now it is easy to see that $\# \partial F=m$ for $\Gamma_{m}$ which is $O(\log \lambda)$. Thus (5.16) implies
\begin{equation}
N_{j}(\lambda)=\frac{d_{j}^{2}}{6}N(\lambda)+O(\log\lambda).
\end{equation} In this case $N(\lambda)=O(\lambda^{\log 3/\log 5)})$.

Another way to explain the small reminder in this case is that $\partial F$ on $SG$ is just a countable set, hence a set of dimension zero.

In fact it is possible to improve the error estimate in (5.21) for the 2-dimensional representation $\pi_{2}$ so that the error is zero! Out of the first $3N$ eigenfunctions, exactly $2N$ are associated to $\pi_{2}$ (this requires a proper sorting of the high multiplicity eigenspaces). Another way of saying this is that the eigenfunctions come in triples, two associated with $\pi_{2}$ and the remaining are associated to either $\pi_{0}$ or $\pi_{1}$. This can be seen by examining the explicit description of the spectrum via the method of spectral decimation [GRS], or by a Rayleigh quotient argument given in [ASST] for a related fractal, the pentagasket (spectral decimation does not apply to this fractal, but it is post-critically finite (PCF)). In fact, the reasoning in [ASST] yields the following.

\begin{prop}
For a large class of PCF fractals with dihedral $D_{k}$ symmetry group, it is possible to sort the eigenfunctions (with increasing eigenvalue order) into groups of $k$, with each group containing all $\left[\frac{k-1}{2}\right]$ 2-dimensional representations and half (one when $k$ is odd, and 2 when $k$ is even) the 1-dimensional representations. 
\end{prop}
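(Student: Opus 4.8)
\noindent The plan is to pass from $D_k$ to its rotation subgroup $C_k$ --- whose gasket-type fundamental domain meets its images in only finitely many (in fact two) points --- and then to run a min--max interlacing argument in the spirit of Lemma 5.2 but \emph{sharp}, following the Rayleigh-quotient reasoning of [ASST]. Write $r$ for the generating rotation, $\omega=e^{2\pi i/k}$, and decompose $L^2(X,\mu)=\bigoplus_{\ell=0}^{k-1}V_\ell$ into $C_k$-isotypic pieces, where $f\in V_\ell$ iff $f(rx)=\omega^\ell f(x)$. Each $V_\ell$ is $A$-invariant with discrete spectrum $\lambda^{(\ell)}_1\le\lambda^{(\ell)}_2\le\cdots$. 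A reflection $s\in D_k$ commutes with $A$ and sends $V_\ell$ to $V_{k-\ell}$, so $\lambda^{(\ell)}_j=\lambda^{(k-\ell)}_j$; for $\ell\ne 0$ (and $\ell\ne k/2$ when $k$ is even) an eigenvalue common to $V_\ell$ and $V_{k-\ell}$ produces an $A$-eigenspace of dimension $\ge 2$ carrying the irreducible $\rho_\ell$, while $V_0$ (and $V_{k/2}$ when $k$ is even) is $s$-invariant and its $s$-symmetric and $s$-antisymmetric parts carry the relevant $1$-dimensional representations.

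The heart of the matter is a subspace common to all of the twisted problems, and this is where the structural hypothesis defining ``a large class'' enters: $X=\bigcup_{i=1}^{k}F_i(X)$ with the cells cyclically permuted by $r$, consecutive cells $F_i(X)\cap F_{i+1}(X)$ meeting in a single point, non-consecutive cells disjoint, and no $C_k$-invariant central cell. Let $S=F_1(X)$ be the resulting fundamental domain for $C_k$; it carries exactly two junction points $p$ and $p'$. Restriction to $S$ identifies $V_\ell$ with the functions on $S$ obeying the \emph{single} constraint $f(p)=\omega^\ell f(p')$ --- the other compatibility conditions around the cycle collapse because $\omega^k=1$ --- and identifies $A|_{V_\ell}$ with the Laplacian on $S$ under this constraint plus the ambient boundary condition at the true boundary point of $S$ (the same for every $\ell$). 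Let $W\subset\mathrm{dom}\,\mathcal E(S)$ be the subspace of functions vanishing at both $p$ and $p'$. Then $W\subset\mathrm{dom}(A|_{V_\ell})$ with codimension exactly $1$ for every $\ell$, and by locality and self-similarity of $\mathcal E$ the Rayleigh quotient of $f\in W$ does not depend on $\ell$ (its $\omega^{i\ell}$-equivariant extension to $X$ involves only unit-modulus phases and rotation isometries). Thus $W$ carries one fixed self-adjoint operator, with eigenvalues $\lambda^W_1\le\lambda^W_2\le\cdots$.

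The min--max principle now gives, for all $\ell$ and $j$, $\lambda^{(\ell)}_j\le\lambda^W_j\le\lambda^{(\ell)}_{j+1}$: the left inequality because $W$ is a subspace of $\mathrm{dom}(A|_{V_\ell})$, the right because $\mathrm{dom}(A|_{V_\ell})$ is a codimension-one extension of $W$. Hence $\{\lambda^{(\ell)}_j:0\le\ell\le k-1\}\subset[\lambda^W_{j-1},\lambda^W_j]$ for every $j$, so the $j$-th eigenvalues of the $k$ components are separated from the $(j\pm1)$-st ones; when the eigenvalues of $A$ are listed with multiplicity, the $j$-th block of $k$ is therefore exactly $\{\lambda^{(\ell)}_j:0\le\ell\le k-1\}$. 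By the first paragraph this block consists of one value from $V_0$ (lying in $\pi_0$ or $\pi_1$); when $k$ is even, one from $V_{k/2}$ (lying in one of the other two $1$-dimensional representations); and, for each $\ell=1,\dots,\left[\frac{k-1}{2}\right]$, the coincident pair $\lambda^{(\ell)}_j=\lambda^{(k-\ell)}_j$, which is a single eigenvalue of $A$ furnishing a $2$-dimensional eigenspace for $\rho_\ell$. That is precisely the asserted composition --- all $\left[\frac{k-1}{2}\right]$ two-dimensional representations together with half of the one-dimensional ones. When some $\lambda^{(\ell)}_j$ is a repeated eigenvalue or coincides with a boundary value $\lambda^W_j$, the counts are unchanged and one picks the basis inside each $A$-eigenspace so as to realize the sorting --- the ``proper sorting'' referred to in the statement.

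The hard part is the structural step: identifying exactly which PCF fractals have a $C_k$-fundamental domain meeting its rotational neighbors in one point each, with the twisted gluing collapsing to a single linear constraint, so that the codimension above is $1$ and the blocks have size exactly $k$ --- rather than some larger fixed codimension $r$, which would only yield blocks of size $k$ up to an $O(r)$ discrepancy, as in Lemma 5.2. One must also handle with care the identification of $V_\ell$ with a boundary-value problem on $S$ at the level of the fractal Dirichlet form, and the bookkeeping of high-multiplicity eigenspaces. For the Sierpinski gasket ($k=3$) and the pentagasket ($k=5$) these points are routine, and ``a large class'' means precisely the class for which they persist; running the same argument on the approximating graphs $\Gamma_m$ gives the corresponding statement for the graph Laplacians, which is all that is needed whenever spectral decimation applies.
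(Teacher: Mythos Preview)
Your argument is correct and is precisely the Rayleigh--quotient interlacing argument that the paper attributes to [ASST]; the paper itself gives no independent proof but simply asserts that ``the reasoning in [ASST] yields'' the proposition. Your passage to the cyclic subgroup $C_k$, the identification of each isotypic component $V_\ell$ with a one--parameter gluing problem on a single cell, and the codimension--one interlacing $\lambda^{(\ell)}_j\le\lambda^W_j\le\lambda^{(\ell)}_{j+1}$ are exactly that argument, and your explicit articulation of the structural hypothesis (cells cyclically permuted, consecutive cells meeting in a single point, no central invariant cell) makes transparent what the paper's phrase ``a large class'' is standing in for.
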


In particular, for the 2-dimensional representations
\[N_{j}(\lambda)=\frac{2}{k}N(\lambda)\pm k,\]
while for the 1-dimensional representations we only have
\[N_{j}(\lambda)=\frac{1}{2k}N(\lambda)+O(\log\lambda).\]

Even more surprising, there is experimental evidence that for the Sierpinski carpet with $D_{4}$ symmetry, something very close to Proposition 5.2 holds: there are some ``mistakes'' in the groupings but the mistakes eventually get ``corrected''. This will be discussed in [BKS].

\textbf{Example 5.2: The $2$-torus}

The dihedral group $D_{4}$ acts on the 2-torus $\RR^{2}/\ZZ^{2}$ and is a symmetry group of the standard Laplacian. It has four 1-dimensional representations and one 2-dimensional representation ($1^{2}+1^{2}+1^{2}+1^{2}+2^{2}=8$). The eigenfunctions of the Laplacian are just $e^{2\pi i(k_{1}x_{1}+k_{2}x_{2})}$ with $(k_{1},k_{2})\in\ZZ^{2}$, and the action of $D_{4}$ on $\ZZ^{2}$ describes the action on the eigenspaces. Note that a generic point in $\ZZ^{2}$ has an eight element orbit, and the associated 8-dimensional eigenspace (it may be a subspace of a larger eigenspace) splits exactly as expected. On the other hand, a four element orbit corresponds to just the trivial representation. In other words, if we consider the fundamental domain $F=\{(k_{1},k_{2}): k_{1}\geq k_{2}\geq 0\}$ with boundary $\partial F=\{(k_{1},k_{2}):k_{1}=k_{2}>0\text{ or }k_{2}=0\}$, then 
\begin{equation}
\left|N_{j}(\lambda)-\frac{d_{j}^{2}}{8}N(\lambda)\right|\leq\# \left\{(k_{1},k_{2})\in\partial F: k_{1}^{2}+k_{2}^{2}\leq\frac{\lambda}{4\pi^{2}}\right\}=O(\lambda^{1/2}).
\end{equation}
The same estimate is valid on the square with either Dirichlet or Neumann boundary values.

It seems likely that this example is typical for Laplacians on compact manifolds.

\begin{con}
Consider the Laplace-Beltrami operator on a compact Riemannian manifold of dimension $n$ with a finite symmetry group $G$ (if the manifold has a boundary, assume the boundary is sufficiently regular, and take Dirichlet or Neumann boundary conditions). Then
\begin{equation}
N_{j}(\lambda)-\frac{d_{j}^{2}}{\# G}N(\lambda)=O(\lambda^{\frac{n-1}{2}}) \;\;\;\;\;\;\;\;\text{ as } \lambda\to\infty.
\end{equation}
\end{con}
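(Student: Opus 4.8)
The plan is to reduce the statement to a sharp equivariant Weyl law and to prove that law by the wave-equation (Fourier integral operator) method. Let $U_g$ be the unitary operator $(U_gf)(x)=f(g^{-1}x)$ on $L^2(M)$ and $P_j$ the orthogonal projection onto $L^2_j$. Since $E_\lambda$ commutes with every $U_g$, the operator $P_jE_\lambda$ is the orthogonal projection onto $L^2_j\cap\operatorname{Range}E_\lambda$, so $N_j(\lambda)=\operatorname{Tr}(P_jE_\lambda)$; the character orthogonality relations give $P_j=\frac{d_j}{\#G}\sum_{g\in G}\overline{\chi_j(g)}\,U_g$, whence
\[
N_j(\lambda)=\frac{d_j}{\#G}\sum_{g\in G}\overline{\chi_j(g)}\,\tau_g(\lambda),\qquad \tau_g(\lambda):=\operatorname{Tr}(U_gE_\lambda)=\int_M K_\lambda(g^{-1}x,x)\,d\mu(x).
\]
The term $g=e$ equals $\frac{d_j\chi_j(e)}{\#G}\operatorname{Tr}E_\lambda=\frac{d_j^2}{\#G}N(\lambda)$, so $N_j(\lambda)-\frac{d_j^2}{\#G}N(\lambda)=\frac{d_j}{\#G}\sum_{g\ne e}\overline{\chi_j(g)}\,\tau_g(\lambda)$, and since $G$ is finite it suffices to show $\tau_g(\lambda)=O(\lambda^{(n-1)/2})$ for each fixed $g\ne e$. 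We may assume the action is effective (otherwise replace $G$ by its image in the isometry group, which changes nothing); then for $g\ne e$ the fixed set $\operatorname{Fix}(g)$ is a proper closed totally geodesic submanifold, of some dimension $f(g)\le n-1$.

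Now estimate $\tau_g$ via the wave trace. Put $\nu=\sqrt\lambda$ and regard $N$, $N_j$, $\tau_g$ henceforth as functions of $\nu$; pick $\rho\in\mathcal S(\RR)$ with $\rho\ge0$, $\rho(0)>0$, and $\widehat\rho$ supported in $(-\epsilon,\epsilon)$ for some $\epsilon$ smaller than half the injectivity radius. Let $W_t$ be the kernel of $\cos(t\sqrt A)$. By finite propagation speed, $\operatorname{Tr}(U_g\cos(t\sqrt A))=\int_M W_t(g^{-1}x,x)\,d\mu(x)$ vanishes, for $|t|<\epsilon$, unless $d(x,gx)<\epsilon$, i.e. unless $x$ lies in a small neighbourhood of $\operatorname{Fix}(g)$; in particular if $\operatorname{Fix}(g)=\varnothing$ then $\tau_g(\lambda)=O_N(\lambda^{-N})$ for all $N$ and we are done for that $g$. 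Otherwise, insert the Hörmander parametrix for $W_t$ in Fermi coordinates $(y,z)$ adapted to $\operatorname{Fix}(g)$ ($y$ tangent, $z$ normal), in which $g^{-1}x$ has coordinates $\approx(y,Rz)$ for an orthogonal map $R$ of the normal space having no eigenvalue $1$. Carrying out the $z$-integration and the fibre ($\xi$-) integration produces a Jacobian factor $|\det(R-I)|^{-1}$, forces the surviving oscillatory integral onto the covariables tangent to $\operatorname{Fix}(g)$, and leaves precisely the diagonal wave trace of the induced $f(g)$-dimensional model; integrating against $\widehat\rho(t)\cos(\nu t)\,dt$ and comparing with that model yields
\[
(\rho*d\tau_g)(\nu)=O\!\left(\nu^{\,f(g)-1}\right)=O(\nu^{\,n-2}).
\]
The stationary phase here is clean exactly because $R-I$ is invertible — the defining property of $\operatorname{Fix}(g)$ — so one gets a genuine asymptotic expansion with no logarithmic loss; curvature and lower-order symbol terms only improve the exponent, and $\epsilon<{}$half the injectivity radius excludes spurious critical points from longer geodesics joining $x$ to $g^{-1}x$. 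Feeding this, together with Hörmander's smoothed Weyl law $(\rho*dN)(\nu)=n\,c_n\vol(M)\,\nu^{n-1}+O(\nu^{n-2})$, into the first step gives $(\rho*dN_j)(\nu)=\frac{d_j^2}{\#G}\,n\,c_n\vol(M)\,\nu^{n-1}+O(\nu^{n-2})$. Since $N_j$ is monotone, Hörmander's Tauberian theorem upgrades this to $N_j(\lambda)=\frac{d_j^2}{\#G}c_n\vol(M)\lambda^{n/2}+O(\lambda^{(n-1)/2})$, which together with $N(\lambda)=c_n\vol(M)\lambda^{n/2}+O(\lambda^{(n-1)/2})$ is the claim. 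The case $f(g)=n-1$ (an isometric reflection) shows the exponent is sharp, matching the torus bound (5.22); what is sketched here is essentially Donnelly's equivariant Weyl law.

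The main obstacle is the manifold-with-boundary case. There Hörmander's interior parametrix must be replaced by the boundary analysis of Seeley and Ivrii: $\cos(t\sqrt A)$ now propagates by reflecting off $\partial M$, so one must follow billiard trajectories joining $x$ to $g^{-1}x$ in place of geodesics, and the stationary-phase analysis near $\operatorname{Fix}(g)\cap\partial M$ and near glancing rays is delicate. Exactly as for the sharp remainder in the ordinary Weyl law with boundary, one expects to need a dynamical non-degeneracy hypothesis — that the set of periodic billiard trajectories have measure zero — which is why the statement is offered as a conjecture; in the boundaryless case the argument above should go through in full. The remaining loose ends are routine: confirming that the clean stationary phase really delivers $O(\nu^{f(g)-1})$ (it does, $\operatorname{Fix}(g)$ being a clean critical manifold with nondegenerate transverse Hessian) and the trivial bookkeeping of summing over the finitely many $g\ne e$.
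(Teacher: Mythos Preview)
The paper does not prove this statement: it is presented as a conjecture, followed only by the one-sentence remark that ``it should be possible to prove this conjecture either by modifying a proof of the Weyl asymptotics to take into account the $G$ action, or else to realize the spaces $L^2_j(X)$ as $L^2$ spaces on a fundamental domain $F$ with certain boundary conditions.'' Your proposal is a fleshed-out version of the first of these two routes --- the equivariant wave-trace/FIO argument --- and, as you yourself note, it is essentially Donnelly's equivariant Weyl law, which does settle the boundaryless case. The paper's second suggested route (reduce each $L^2_j$ to a boundary-value problem on a fundamental domain and quote sharp Weyl remainders for those problems) is different in spirit: it bypasses the fixed-point analysis of $\tau_g$ entirely but introduces corners and mixed boundary conditions on $\partial F$, so one trades one set of technicalities for another. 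Neither approach is actually carried out in the paper, so you have supplied considerably more than the paper does.

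One small caution: your parenthetical ``otherwise replace $G$ by its image in the isometry group, which changes nothing'' is not quite innocuous. If the action has nontrivial kernel $K$, then any irreducible $\pi_j$ that is nontrivial on $K$ has $L^2_j(X)=0$, hence $N_j(\lambda)\equiv 0$, while $d_j^2/\#G>0$; the conjecture as literally stated then fails for that $j$. Effectivity should therefore be read as a standing hypothesis, not as a harmless reduction. Apart from that, your sketch for the closed case is sound, and your identification of the boundary case (billiard dynamics, glancing rays, $\operatorname{Fix}(g)\cap\partial M$) as the genuine obstacle matches the paper's decision to label the statement a conjecture rather than a theorem.
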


It should be possible to prove this conjecture either by modifying a proof of the Weyl asymptotics to take into account the $G$ action, or else to realize the spaces $L^{2}_{j}(X)$ as $L^{2}$ spaces on a fundamental domain $F$ with certain boundary conditions, and then observing that the Weyl asymptotic law holds for those boundary conditions ([SV1],[SV2]).

What is perhaps more interesting is that in the noncompact setting we may be able to strengthen the statement to their exact identity
\begin{equation}
M_{j}(\lambda)=\frac{d_{j}^{2}}{\# G}M(\lambda)\;\;\;\;\;\;\;\;\;\text{ for all }\lambda.
\end{equation} We illustrate this in the following examples.

\textbf{Example 5.3: The square lattice}

We return to Example 3.2, taking $n=2$, and consider the $D_{4}$ symmetry. In this case the parameter space $C$ for the eigenfunctions is the square $\left[-\frac{1}{2},\frac{1}{2}\right]\times\left[-\frac{1}{2},\frac{1}{2}\right]$ and $D_{4}$ acts also on $C$. Almost every orbit is a full eight element orbit, and so all representations are involved in the 8-dimensional eigenspace spanned by $e^{2\pi i(\pm m_{1}\xi_{1}\pm m_{2}\xi_{2})}$ and  $e^{2\pi i(\pm m_{2}\xi_{1}\pm m_{1}\xi_{2})}$. For example, the trivial representation $\pi_{0}$ occurs in the average of these eight eigenfunctions, which is $\frac{1}{2}\cos 2\pi m_{1}\xi_{1}\cos 2\pi m_{2}\xi_{2}+\frac{1}{2}\cos2\pi m_{2}\xi_{1}\cos2\pi m_{1}\xi_{2}$. Thus $M_{0}(\lambda)$ is the average over $\ZZ^{2}$ of 
\begin{equation}
\int_{C\cap\{4\sin^{2}\pi\xi_{1}+4\sin^{2}\pi\xi_{2}\leq\lambda\}}\frac{1}{4}(\cos2\pi m_{1}\xi_{1}\cos 2\pi m_{2}\xi_{2}+\cos2\pi m_{2}\xi_{1}\cos 2\pi m_{1}\xi_{2})^{2}d\xi.
\end{equation} But the average of the integrand is $\frac{1}{8}$ for almost every $\xi$, and it is straightforward to interchange the average and the integral to obtain $M_{0}(\lambda)=\frac{1}{8}M(\lambda)$. The other 1-dimensional representations are given by similar expressions, so (5.24) holds for all of them, and we obtain the same result for the 2-dimensional representation by subtraction. 

\textbf{Example 5.4: The plane with dihedral symmetry}

We return to Example 3.1 with $n=2$ and the dihedral symmetry group $D_{N}$. For the trivial representation $\pi_{0}$
\begin{equation}
K^{(0)}_{\lambda}(x,y)=\frac{1}{2N}\sum_{\gamma\in D_{N}}K_{\lambda}(x,\gamma y)
\end{equation} where $K_{\lambda}$ is given by (3.1). The contribution to $K_{\lambda}^{(0)}(x,x)$ from $\gamma=$ identity is exactly $\frac{1}{2N}K_{\lambda}(x,x).$ For all other $\gamma$, $|x-\gamma x|$ on average is large, and $K_{\lambda}(x,\gamma x)$ tends to zero as $|x-\gamma x|\to\infty$ because of the decay of Bessel functions $J_{0}$. Thus $M_{0}(\lambda)=\frac{1}{2N}M(\lambda)$ exactly, and a similar argument establishes (5.24) for all representations.

\section{Lie group symmetries}

Suppose the group of symmetries of $G$ considered in the previous section is a compact Lie group. Then the family $\{\pi_{j}\}$ of inequivalent irreducible representations of $G$ is infinite, but the dimensions $\{d_{j}\}$ are all finite. So the definitions of $L^{2}_{j}(x)$ and $M_{j}(\lambda)$ make sense, and (5.1) holds. But there is no dimension formula, and the ratio in (5.2) is likely to tend to zero, and so gives no information. Our heuristic in this case is
\begin{equation}
\frac{M_{j}(\lambda)}{M_{k}(\lambda)}\to\frac{d^{2}_{j}}{d^{2}_{k}}\;\;\;\;\;\;\;\text{ as }\lambda\to\infty
\end{equation} with the limit being uniform if we restrict $j,k\leq N$ for any finite $N$.

For this to have any hope of being valid, we must have most orbits under $G$ to be full orbits, or equivalently, the stabilizer subgroups of most points must be trivial. For example, the $n$-sphere $S^{n}$ has symmetry group $SO(n+1)$ (or $O(n+1)$), and the eigenfunctions of the invariant Laplacian are the spaces of spherical harmonics. When $n\geq 3$ there are representations of $SO(n+1)$ that do not even appear in $L^{2}(S^{n})$, and in the case $n=2$, even though all representations appear, the ratios in (6.1) tend to $d_{j}/d_{k}$. Of course $S^{n}=SO(n+1)/SO(n)$, so the stabilizer subgroups are nontrivial.

If $X=G$ and the operator is the bi-invariant Laplacian (Casimir operator), then (6.1) is trivially true, since the eigenfunctions are just the entry functions of $\pi_{j}$, and so have dimension exactly $d_{j}^{2}/d^{2}_{k}$ once $\lambda$ is larger than the eigenvalues associated to $\pi_{j}$ and $\pi_{k}$.

We can modify this example to get a more interesting class of examples by considering $X=G\times M$ where $M$ is a compact Riemannian manifold and the operator is the sum of the Laplacians on each factor. Write $\{\lambda_{j}^{(G)}\}$ for the eigenvalues associated with the representations $\{\pi_{j}\}$ and $\{\lambda_{k}^{(M)}\}$ for the eigenvalues (repeated in case of multiplicity) of the Laplacian on $M$. Then 
\begin{align}
N_{j}(\lambda)&=(\{\#k:\lambda_{k}^{(M)}\leq\lambda-\lambda_{j}^{(G)}\})d^{2}_{j}\\
&=N^{(M)}(\lambda-\lambda_{j}^{(G)})d_{j}^{2}.\notag
\end{align}
Since $N^{(M)}$ satisfies Weyl's asymptotic law, it is clear that $N^{(M)}(\lambda-a)/N^{(M)}(\lambda-b)\to1$ as $\lambda\to\infty$, so (6.2) implies (6.1).

Similarly, we can consider $X=G\times M$ where $M$ is a space with a Laplacian for which $M^{(M)}(\lambda)$ is well defined and satisfies a power law asymptotic behavior. Then
\begin{equation}
M_{j}(\lambda)=d^{2}_{j}M^{(M)}(\lambda-\lambda_{j}^{(G)})
\end{equation} and (6.1) holds. 

We expect similar results to hold for various warperd product metrics on $G\times M$, provided the orbits under $G$ remain uniformly bounded in size. The following example shows that we may need to modify the notion of Average in some cases.

\textbf{Example 6.1:} Let $X=\RR^{2}$ with the standard action of the rotation group $SO(2)$. The one-dimensional representations $\{\pi_{j}\}$ associated to $e^{ij\theta}$ are naturally indexed by $j\in\ZZ$, and $L^{2}_{j}$ consists of functions $f(r\cos v,r\sin v)=e^{ijv}f_{0}(r)$, and we project $L^{2}(X)$ onto $L^{2}_{j}(X)$ by 
\begin{equation}
\frac{1}{2\pi}\int_{0}^{2\pi}f(r\cos(v-u),r\sin(v-u))e^{iju}du=\Pi_{j}\;f(r\cos v,r\sin v)
\end{equation} and we recover $f$ from its projection by
\begin{equation}
f(r\cos v,r\sin v)=\sum_{j=-\infty}^{\infty}\Pi_{j}\;f(r\cos v,r\sin v).
\end{equation} The kernel of $E_{\lambda}$ then splits into
\begin{equation}
\sum_{j=-\infty}^{\infty} \Pi_{j}K_{\lambda}((r\cos v,r\sin v),y)
\end{equation} and so we would like $M_{j}(\lambda)$ to be some sort of Average of $K_{\lambda}^{(j)}(x,x)$ given by
\begin{equation}
\frac{1}{2\pi}\int_{0}^{2\pi}K_{\lambda}((r\cos(v-u),r\sin(v-u))e^{iju}du.
\end{equation} By (3.1) this is
\begin{equation}
\frac{1}{(2\pi^{3})}\int_{0}^{2\pi}\int_{0}^{\sqrt\lambda}\int_{0}^{2\pi}e^{isr\cos(v-u-\theta)}e^{-isr\cos(v-\theta)}e^{iju}sd\theta ds du.
\end{equation} Using the identity 
\begin{equation}
\frac{1}{2\pi}\int_{0}^{2\pi}e^{it\cos(a-u)}e^{iju}du=i^{j}J_{j}(t)e^{ija}
\end{equation} twice, we transform (6.8) into
\begin{equation}
\frac{1}{(2\pi)^{2}}\int_{0}^{\sqrt\lambda}\int_{0}^{2\pi}i^{j}J_{j}(sr)e^{ij(v-\theta)}e^{-isr\cos(v-\theta)}sd\theta ds
\end{equation} and then into
\begin{equation}
\frac{1}{2\pi}\int_{0}^{\sqrt\lambda}J_{j}(sr)^{2}sds=\frac{1}{2\pi r^{2}}\int_{0}^{\sqrt\lambda r}J_{j}(s)^{2}s ds.
\end{equation} Since (6.10) is $O\left(\frac{1}{r}\right)$ as $r\to\infty$ we cannot use the usual average on $\RR^{2}$ to define $M_{j}(\lambda)$. However, if define 
\begin{equation}
M_{j}(\lambda)=\lim_{R\to\infty}\frac{1}{R}\int_{|x|\leq R}K_{\lambda}^{(j)}(x,x)dx
\end{equation}then the limit exists and equals $\sqrt\lambda/2\pi^{2}$ for all $j$ (see [S] section 3). Thus (6.1) holds.


\section{A fractal example}

In this section we discuss an example where the operator is a fractal Laplacian. For compact fractals, where the Laplacian has discrete spectrum, it is usually the case that the eigenvalue counting function has a power order growth, $N(\lambda)\sim \lambda^{\alpha}$, but the ratio $N(\lambda)/\lambda^{\alpha}$ usually does not have a limit [KL]. For example, for the Kigami Laplacian ([Ki], [S2]) on the Sierpinski gasket ($SG$), with either Dirichlet or Neumann boundary conditions, the power $\alpha=\log 3/\log 5$, and the ratio approaches a discontinuous multiplicative periodic function as $\lambda\to\infty$. The power is rather easy to explain, since a contraction of $1/3$ in measure increases the eigenvalue by a factor of 5. The value $2\alpha$ has been mistakenly interpreted as a ``spectral dimension'' under the erroneous assumption that the Laplacian is an operator of order 2, whereas in fact it is an operator of order $d+1$, where $d=\log 3/\log (5/3)$ is the dimension of $SG$ in the effective resistance metric [S2].

The example we look at is a generic infinite blowup $SG_{\infty}$ of $SG$ [S]. It was shown by Teplyaev [Te] that the appropriately extended Laplacian $\Delta_{\infty}$ on $SG_{\infty}$ has pure point spectrum, and each eigenvalue has infinite multiplicity. More specifically, let $\Lambda=\{\lambda_{j}\}$ be the discrete spectrum of the Neumann Laplacian on $SG$. Then $5^{m}\Lambda\subset\Lambda$ for all positive integers $m$. Let $\Lambda_{\infty}=\bigcup_{m=0}^{\infty}5^{-m}\Lambda$. Then each $\lambda\in \Lambda_{\infty}$ is an eigenvalue of $-\Delta_{\infty}$ with an infinite dimensional eigenspace $\mathcal{E}_{\lambda}$ of $L^{2}$ functions (in fact there is a basis $\mathcal{E}_{\lambda}$ consisting of compactly supported functions). The actual spectrum of $\Delta_{\infty}$ is the closure of $\Lambda_{\infty}$, and is topologically a Cantor set, but in fact the eigenspaces $\mathcal{E}_{\lambda}$ for $\lambda\in\Lambda_{\infty}$ give the spectral resolution (2.1) as
\begin{equation}
E_{\lambda}f(x)=\sum_{\lambda'\leq\lambda}\int_{X}K_{\lambda'}(x,y)f(y)d\mu(y)\;\;\;\;\;\text{ for}
\end{equation}
\begin{equation}
K_{\lambda'}(x,y)=\sum_{j}\varphi_{j}(x)\overline{\varphi_{j}(y)}
\end{equation} where $\{\varphi_{j}\}$ is an ortonormal basis of $\mathcal{E}_{\lambda'}$. 

For the Laplacian on $SG$ with Neumann (or Dirichlet) conditions, the analogous sums (7.1) and (7.2) are finite, and it is known that 
\begin{equation}
M(\lambda)\sim\lambda^{\alpha}\psi(\lambda)\;\;\;\;\;\;\;\text{ as }\lambda\to\infty
\end{equation} where $\psi$ is a discontinuous bounded function (also bounded away form zero) satisfying the multiplicative periodic condition $\psi(5\lambda)=\psi(\lambda)$.\\
To understand the spectral mass function for $SG_{\infty}$ we need to simultaneously understand the averaging in (2.5) and the behavior of the infinite series in (7.1)

The infinite blowup $SG_{\infty}$ is the increasing union of finite blowups $F^{-1}_{w_{1}}\circ\cdots\circ F^{-1}_{w_{m}}(SG)=SG_{w}$ where $w=(w_{1},\ldots,w_{m})$ is a word of length $|w|=m$ with $w_{j}\in\{0,1,2\}$ and $\{F_{0},F_{1},F_{2}\}$ is the iterated function system (IFS) defining $SG$. The blowup is \emph{generic} if the infinite word $(w_{1},w_{2},\ldots)$ is not eventually constant. The measure of $SG_{w}$ is $3^{|w|}$, and the average on $SG_{\infty}$ is
\begin{equation}
\text{Ave}(f)=\lim_{|w|\to\infty}3^{-|w|}\int_{SG}f d\mu
\end{equation} if the limit exists. If we don't know that the limit exists we may consider upper Ave$^{+}(f)$ and lower Ave$^{-}(f)$ averages by taking $\limsup$ and $\liminf$ on the right side of (7.4).

Now fix a value of $w$ with $|w|=m$ and fix an eigenvalue $\lambda'\in\Lambda_{\infty}$. The basis of $\mathcal{E}_{\lambda'}$ given in [T] is not orthonormal, but it can be broken into three parts. One part consists of functions that vanish on $SG_{w}$. Although this is an infinite set, it will give zero contribution to
\begin{equation}
3^{-|w|}\int_{SG_{w}}K_{\lambda}(x,x)d\mu(x)
\end{equation}
The second part consists of functions supported in $SG_{w}$. This is nonempty only when $\lambda'\in 5^{-m'}\Lambda_{m}$ for some $m'$, and then it is finite but may be large. Its contribution to (7.5) may be understood in terms of the $5^{m'}\lambda'$ eigenspace on $SG$. The third part consists of the functions whose support intersects both $SG_{w}$ and its complement.  We will show that there are at most six functions in this part for each $\lambda'$.

\begin{lem}
For fixed $w$ and $\lambda'$ there exits an orthonormal basis of the eigenspace $\mathcal{E}_{5^{m}\lambda'}$ consisting of functions supported in either $SG_{w}$ or its complement, together with at most six eigenfunctions $\{\widetilde{\varphi}_{j}\}$ whose support intersects both $SG_{w}$ and its complement.
\end{lem}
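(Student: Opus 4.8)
Write $V$ for the eigenspace of $\Delta_\infty$ appearing in the statement and $\lambda$ for its eigenvalue. The plan is to realize $V$ as an orthogonal direct sum $V=V^{\mathrm{in}}\oplus V^{\mathrm{out}}\oplus W$, in which $V^{\mathrm{in}}$ consists of the eigenfunctions in $V$ that vanish a.e.\ off $SG_w$, $V^{\mathrm{out}}$ of those that vanish a.e.\ on $SG_w$, and $W$ has dimension at most six; an orthonormal basis of $V$ adapted to this splitting is then of the asserted shape. The first ingredient is combinatorial. Since $SG_w$ is a cell of the finitely ramified self-similar structure of $SG_\infty$, and distinct cells meet only in junction vertices, the set $\partial SG_w:=SG_w\cap\overline{SG_\infty\setminus SG_w}$ is contained in the three boundary vertices of the cell $SG_w$, so $\#\partial SG_w\le 3$. (Two of those three vertices are already junction points of the next larger cell $SG_{(w_1,\dots,w_m,w_{m+1})}$, while the third becomes a junction point at some later stage precisely because the defining word is not eventually constant; hence $\#\partial SG_w=3$ for a generic blowup, and the bound six obtained below is attained.)

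Now introduce the linear map $T\colon V\to\mathbb{C}^{\partial SG_w}\times\mathbb{C}^{\partial SG_w}$ sending $\varphi$ to the pair consisting of its values at the points of $\partial SG_w$ and of its normal derivatives there, computed from the $SG_w$ side; the target has dimension $2\,\#\partial SG_w\le 6$. The key claim is $\ker T=V^{\mathrm{in}}\oplus V^{\mathrm{out}}$. The inclusion $V^{\mathrm{in}}\oplus V^{\mathrm{out}}\subseteq\ker T$ is clear: a function supported in $SG_w$ vanishes on $\partial SG_w$ by continuity, and the Kirchhoff matching condition at a junction point in the definition of the domain of $\Delta_\infty$ makes the normal derivative from the $SG_w$ side cancel that from the complementary side, the latter vanishing for such a function; symmetrically for $V^{\mathrm{out}}$. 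For the reverse inclusion, given $\varphi\in\ker T$ I would check that $\varphi\,\mathbf{1}_{SG_w}$ again lies in $V$: it is continuous, lies in $L^2$, and has finite (local) energy, hence is in the form domain, and the Gauss--Green formula on the compact fractal $SG_w$, together with the eigenfunction equation on $SG_w$ and the vanishing of both the boundary values and the normal derivatives of $\varphi$ along $\partial SG_w$, gives $\mathcal{E}(\varphi\,\mathbf{1}_{SG_w},v)=\lambda\int_X \varphi\,\mathbf{1}_{SG_w}\,v\,d\mu$ for every $v$ in the form domain, so that $\varphi\,\mathbf{1}_{SG_w}\in V^{\mathrm{in}}$; likewise $\varphi\,\mathbf{1}_{SG_\infty\setminus SG_w}\in V^{\mathrm{out}}$, and $\varphi=\varphi\,\mathbf{1}_{SG_w}+\varphi\,\mathbf{1}_{SG_\infty\setminus SG_w}$ decomposes. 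Here $V^{\mathrm{in}}$ and $V^{\mathrm{out}}$ are orthogonal (disjoint supports up to a null set); $V^{\mathrm{in}}$ is finite dimensional, since it embeds into a single eigenspace of the Kigami Laplacian on the compact cell $SG_w\cong SG$, whereas $V^{\mathrm{out}}$ may be infinite dimensional; and each is the intersection of the closed subspace $V\subset L^2$ with a subspace of the form $\{f:f=0\text{ a.e.\ on a fixed set}\}$, hence closed, so $\ker T$ is closed.

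Set $W=(\ker T)^\perp$ inside $V$, so $V=\ker T\oplus W$ orthogonally. Because $W\cap\ker T=\{0\}$, the restriction of $T$ to $W$ is injective into a space of dimension at most six, whence $\dim W\le 6$. Juxtaposing an orthonormal basis of $V^{\mathrm{in}}$, an orthonormal (Hilbert) basis of $V^{\mathrm{out}}$, and an orthonormal basis $\{\widetilde\varphi_j\}$ of $W$ yields an orthonormal basis of $V$ of the required form: the first two families consist of functions supported in $SG_w$ and in $\overline{SG_\infty\setminus SG_w}$ respectively, while each nonzero $\widetilde\varphi_j$ has support meeting both $SG_w$ and its complement, for otherwise it would lie in $V^{\mathrm{in}}$ or in $V^{\mathrm{out}}\subseteq\ker T$, contradicting $0\neq\widetilde\varphi_j\in W=(\ker T)^\perp$.

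The crux of the argument, and the step I expect to demand the most care, is the reverse inclusion $\ker T\subseteq V^{\mathrm{in}}\oplus V^{\mathrm{out}}$: one must be sure that the vanishing of value and normal derivative at the at most three junction points of $SG_w$ is exactly the obstruction to extending a cell eigenfunction by zero --- equivalently, that these finitely many conditions are the only ones coupling $SG_w$ to the rest of $SG_\infty$ --- and that the normal derivatives meant here are precisely those coming from the renormalized limit defining $\Delta_\infty$, so that Gauss--Green applies with the right constants. This is the finite ramification of $SG$ at work, and it is what makes the bound the absolute constant six, rather than a quantity growing with $\lambda$ or with $|w|$.
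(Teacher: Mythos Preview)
Your proof is correct and follows essentially the same route as the paper: both use the boundary-data map sending an eigenfunction to its three values and three normal derivatives on $\partial SG_w$, note that the kernel of this map splits as the direct sum of eigenfunctions supported in $SG_w$ and in its complement, and conclude that the at most six-dimensional complement furnishes the exceptional functions $\{\widetilde\varphi_j\}$. Your presentation is in fact more careful than the paper's---you explicitly verify via Gauss--Green that $\varphi\,\mathbf{1}_{SG_w}$ remains an eigenfunction when $\varphi\in\ker T$, and you take $W=(\ker T)^\perp$ directly rather than choosing arbitrary preimages and then appealing to Gram--Schmidt---but the underlying argument is the same.
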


\begin{proof}
Consider the restrictions to the boundary points of $SG_{w}$ of the functions in the eigenspace and their normal derivatives at these points. Since there are a total of six values in all, this is at most a six dimensional space. Choose a basis $\{\widetilde{\varphi}_{j}\}$ for this space. Then any eigenfunction may be written as a linear combination of $\{\widetilde{\varphi}_{j}\}$ plus an eigenfunction that vanishes together with its normal derivatives at the boundary points of $SG_{w}$. Such an eigenfunction can be split into a sum of eigenfunctions supported in $SG_{w}$ and its complement. We apply Gram-Schmidt to each of the parts separately to obtain the desired orthonormal basis.
\end{proof}

Note that the only time that there will exist nonzero eigenfunctions supported in $SG_{w}$ will be when $5^{m}\lambda'\in\Lambda$.

The contribution to (7.5) corresponding to  $\lambda'$ is 
\begin{equation}
3^{-m}\left(N_{2}(\lambda')+\sum_{j=1}^{6}\int_{SG_{w}}|\widetilde{\varphi}_{j}(x)|^{2}d\mu(x)\right)
\end{equation} where $N_{2}(\lambda')$ denotes the number of orthonormal basis in the second part, and $\{\widetilde{\varphi}_{j}\}$ are the orthonormalizations of the functions in the third part. But $N_{2}(\lambda')$ is essentially (plus or minus 3) the multiplicity of $5^{m}\lambda'$ in $\Lambda$. Thus 
\begin{equation}
3^{-m}\sum_{\lambda'\leq\lambda}N_{2}(\lambda')\approx3^{-m}\sum_{5^{m}\lambda'\leq 5^{m}\lambda}N_{\Delta}(5^{m}\lambda')=3^{-m}M_{\Delta}(5^{m}\lambda)
\end{equation} where $N_{\Delta}$ is the eigenvalue counting function for $-\Delta$ on $SG$ and $M_{\Delta}$ is the spectral mass function. In view of (7.3) and the identity $(5^{m})^{\alpha}=3^{m}$, it follows that (7.7) tends to $\lambda^{\alpha}\psi(\lambda)$ as $m\to\infty$. 

We would like to estimate the contributions of the finite sum in (7.6) to (7.5). Now $\int_{SG_{w}}|\widetilde{\varphi}_{j}|^{2}d\mu(x)=1$, and we can estimate 
\begin{equation}
\int_{SG_{w}}|\widetilde{\varphi}_{j}|^{2}d\mu\leq 3^{m}||\widetilde{\varphi}_{j}||^{2}_{\infty}.
\end{equation} Using the standard Sobolev type estimate
\begin{equation}
||u||_{\infty}\leq c\;\mathcal{E}(u,u)^{1/2} 
\end{equation} if $u$ has a zero, we obtain
\begin{equation}
||\widetilde{\varphi}_{j}||_{\infty}\leq c(\lambda')^{1/2}.
\end{equation}Combining (7.8) and (7.10) yields the estimate 
\begin{equation}
3^{-m}\sum_{j=1}^{6}\int_{SG_{w}}|\widetilde{\varphi}_{j}(x)|^{2}d\mu(x)\leq c(\lambda')^{1/2}
\end{equation} for the finite sum in (7.6). We then need to sum (7.11) over all $\lambda'\leq\lambda$.

The spectrum $\Lambda$ of $SG$ can be broken into segments of on the order of $2^{n}$ distinct eigenvalues on the order of $5^{m}$, $n=1,2,\ldots$. So there are about $2^{n}$ eigenvalues in $\Lambda$ such that $\lambda'\approx 5^{n-m'}$, and $\lambda'\leq\lambda$ means $n-m'\leq \log\lambda/\log 5$. This leads to the estimate
\begin{align}
\sum_{\lambda\leq\lambda'}(\lambda')^{1/2}&\leq c\sum_{m'}\sum_{n\leq m'+\frac{\log\lambda}{\log5}}2^{m}\cdot(5^{n-m'})^{1/2}\\
&\leq  c\sum_{m'}(2\cdot 5^{1/2})^{m'+\frac{\log\lambda}{\log 5}}5^{-m/2} \notag\\
&=c\lambda^{\frac{1}{2}+\frac{\log 2}{\log 5}}\sum_{m'}2^{m'},\notag
\end{align} which is useless because the series diverges.

Nevertheless, it is likely that the sum of the right sides of (7.11) is much smaller than the sum of the left sides. The idea is that while we cannot expect to improve on the global estimate (7.10), we only need to bound $\widetilde{\varphi}_{j}$ on $SG_{w}$, which is only a very small portion of its support for large $m'$. If the values of $\widetilde{\varphi}_{j}$ were uniformly distributed over its support, then the left side of (7.11) would be dominated by $3^{-m}$, and in place of (7.12) we would have
\begin{equation}
\sum_{\lambda'\leq\lambda}3^{-m'}\leq c\sum_{m'}\sum_{n\leq n'+\frac{\log2}{\log5}}2^{n}\cdot 3^{-m'}\leq c\lambda^{\frac{\log2}{\log5}}\sum_{m'}\left(\frac{2}{3}\right)^{m'}.
\end{equation} Note only is this finite, but it gives a lower order of growth than the main terms $\lambda^{\alpha}\psi(\lambda)$. While it is overly optimistic to expect such uniform distribution for any one particular eigenvalue, one could reasonably expect it on average in the sum over $\lambda'$.

\begin{con}
On $SG_{\infty}$, $M(\lambda)$ is well-defined and satisfies the same asymptotics (7.3) as on $SG$.
\end{con}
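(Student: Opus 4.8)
The plan is to evaluate, for each word $w$ with $|w|=m$, the normalized quantity $3^{-m}\int_{SG_w}K_\lambda(x,x)\,d\mu$ by inserting into the sum defining $K_\lambda$, for each eigenvalue $\lambda'\le\lambda$, the orthonormal basis of $\mathcal E_{\lambda'}$ produced by Lemma 7.2, and then to pass to the limit $m\to\infty$. The first part (eigenfunctions vanishing on $SG_w$) contributes $0$; the second part is supported in $SG_w$ and each such function contributes its full norm, so it accounts for $3^{-m}\sum_{\lambda'\le\lambda}N_2(\lambda')$; the third part contributes $R_m(\lambda):=3^{-m}\sum_{\lambda'\le\lambda}\sum_{j=1}^{6}\int_{SG_w}|\widetilde\varphi_j^{(\lambda',w)}|^{2}\,d\mu$. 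For the second part, rescaling $SG_w$ to a copy of $SG$ identifies the relevant eigenfunctions with the eigenfunctions on $SG$ of eigenvalue $5^m\lambda'$ that vanish together with their normal derivatives on $\partial SG$; for a fixed $m$ only finitely many $\lambda'$ (namely $5^{-m}(\Lambda\cap[0,5^m\lambda])$) are involved, and $\sum_{\lambda'\le\lambda}N_2(\lambda')=N_\Delta(5^m\lambda)+O\!\left(\#\{\text{distinct eigenvalues of }SG\text{ below }5^m\lambda\}\right)=N_\Delta(5^m\lambda)+O\!\left((5^m\lambda)^{\log 2/\log 5}\right)$, using that spectral decimation produces only $O(T^{\log2/\log5})$ distinct eigenvalues below $T$. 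Since $3^{-m}=(5^m)^{-\alpha}$ and $\psi(5^m\lambda)=\psi(\lambda)$, the estimate (7.3) gives that this main term tends to $\lambda^\alpha\psi(\lambda)$ as $m\to\infty$, independently of the blowup word. Consequently the whole conjecture reduces to the single claim that $R_m(\lambda)\to0$ as $m\to\infty$, which simultaneously yields that the average in (7.4) exists and that $M(\lambda)=\lambda^\alpha\psi(\lambda)(1+o(1))$.

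To attack $R_m(\lambda)$ I would use Teplyaev's structure theorem together with the fact that the cell decomposition of $SG_\infty$ is a tree: an eigenfunction whose support meets both $SG_w$ and its complement is supported in a cell $SG_v$ strictly containing $SG_w$, say with $|v|=m+k$, $k\ge1$. Rescaling $SG_v$ to $SG$ turns $\widetilde\varphi_j^{(\lambda',w)}$ into a unit-norm eigenfunction $g$ of $\Delta$ on $SG$ with eigenvalue $\mu=5^{m+k}\lambda'\in\Lambda$, while $SG_w\subset SG_v$ becomes a level-$k$ cell $C\subset SG$ of measure $3^{-k}$, so that $\int_{SG_w}|\widetilde\varphi_j|^{2}\,d\mu\le\|g\|_2^{-2}\int_C|g|^{2}\,d\mu$. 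Granting the equidistribution-type bound $\|g\|_2^{-2}\int_C|g|^{2}\,d\mu\le c\,3^{-k}$ (uniform in $\mu$, or at least valid after averaging over the eigenvalues $\mu$ that occur), and recalling that for each active $\lambda'$ there are at most six boundary-crossing functions while the number of distinct $\mu\in\Lambda$ below $5^{m+k}\lambda$ is $O((5^{m+k}\lambda)^{\log2/\log5})$, one obtains
\[
R_m(\lambda)\ \lesssim\ 3^{-m}\sum_{k\ge1}(5^{m+k}\lambda)^{\log2/\log5}\,3^{-k}\ =\ \lambda^{\log2/\log5}\sum_{k\ge1}\Bigl(\tfrac23\Bigr)^{m+k}\ \lesssim\ \Bigl(\tfrac23\Bigr)^{m}\lambda^{\log2/\log5}\ \xrightarrow[m\to\infty]{}\ 0,
\]
where we used $5^{\log2/\log5}=2$. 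This is exactly the ``uniform distribution'' computation sketched around (7.13), and since $\log2/\log5<\alpha$ the right-hand side is even $o(\lambda^\alpha)$.

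The main obstacle is the localized $L^2$-mass estimate $\|g\|_2^{-2}\int_C|g|^{2}\,d\mu\le c\,3^{-k}$. The only elementary tool available, the Sobolev inequality (7.9) in the form $\|g\|_\infty^{2}\le c\,\mathcal E(g,g)=c\mu\|g\|_2^{2}$, gives merely $\|g\|_2^{-2}\int_C|g|^{2}\,d\mu\le c\mu\,3^{-k}\le c\,5^{m}\lambda\,(5/3)^{k}$, which \emph{grows} with $k$ and reproduces the divergent series (7.12); and no pointwise improvement of this strength can hold, since the high-multiplicity eigenspaces of $SG$ contain eigenfunctions that concentrate on small sub-cells. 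The realistic route is therefore an \emph{averaged} estimate: for a fixed level-$k$ cell $C\subset SG$, bound $\sum_{\mu\in\Lambda\cap(0,T]}\operatorname{tr}\!\left(\mathbf 1_C\,P^{SG}_\mu\right)$ by $c\,3^{-k}N_\Delta(T)$, i.e.\ show that the diagonal of the spectral projection of $SG$ is, on average over cells of a given level, essentially uniformly distributed. Such a statement should be accessible from the explicit spectral-decimation description of the eigenfunctions of $SG$ ([GRS]) combined with the fact that $\partial SG_w$ has dimension zero, but making the bookkeeping of multiplicities consistent with the cellwise mass distribution is the substantive content the excerpt leaves open. A shortcut worth trying first is to bypass individual eigenfunctions entirely: replace the six boundary-crossers for a given $\lambda'$ by the rank-$\le6$ projection $P^{\partial}_{\lambda'}$ onto their span and estimate $\operatorname{tr}(\mathbf 1_{SG_w}P^{\partial}_{\lambda'})$ directly by a Dirichlet-to-Neumann argument on $\partial SG_w$, which might produce the decay in $k$ without a full equidistribution theorem.
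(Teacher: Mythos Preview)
The statement you are trying to prove is presented in the paper as a \emph{conjecture}; the paper does not supply a proof, only a heuristic argument and supporting evidence. Your proposal reproduces that heuristic essentially verbatim: the three-part decomposition of the eigenspace via the lemma on boundary-crossing eigenfunctions, the identification of the main term with $3^{-m}N_\Delta(5^m\lambda)\to\lambda^\alpha\psi(\lambda)$ as in (7.7), the crude Sobolev bound (7.9)--(7.10) leading to the divergent series (7.12), and the uniform-distribution heuristic yielding the convergent $\sum(2/3)^{m'}$ in (7.13). You have correctly located the gap exactly where the paper does: the localized $L^2$-mass estimate $\|g\|_2^{-2}\int_C|g|^2\,d\mu\le c\,3^{-k}$ (or an averaged version thereof) is the missing ingredient, and you are right that no pointwise version can hold because of eigenfunctions concentrating on small cells.

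Your additions beyond the paper are the rescaling bookkeeping (placing each boundary-crosser in a minimal cell $SG_v$ with $|v|=m+k$ and tracking the level $k$), the reformulation of the needed estimate as an averaged trace bound $\sum_{\mu\le T}\operatorname{tr}(\mathbf 1_C P_\mu^{SG})\le c\,3^{-k}N_\Delta(T)$, and the suggestion of a Dirichlet-to-Neumann argument on $\partial SG_w$ to bypass equidistribution. These are sensible directions and sharpen the paper's informal ``if the values were uniformly distributed'' remark, but none of them is carried to a conclusion, so the proposal remains at the same level as the paper: a clear reduction to a plausible but unproved cell-equidistribution statement. The paper also notes an alternative route you do not mention, namely proving that the integrated-density-of-states definition (1.7) agrees with the spectral-mass definition in this setting, which would immediately give the asymptotics.
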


Of course upper estimates for the heat kernel on $SG_{\infty}$ of the form (2.13) with $\Phi(t)=t^{-\alpha}$ are known (see [BP] and [S]), so (2.23) implies the upper bound
\begin{equation}
M^{+}(\lambda)\leq ct^{\alpha}.
\end{equation} This is weaker than the conjecture, but gives supporting evidence.

Note that the integrated density of states definition~(\ref{eq:integrateddensityofstates}) easily yields the desired estimates, so another approach to proving the conjecture would be to prove the equivalence of~(\ref{eq:integrateddensityofstates}) and the spectral mass in this case.

\section{Covering Spaces}

Suppose $\widetilde{X}$ is a covering space of $X$ with covering map $\pi$ and covering group $\Gamma$, and the operator on $\widetilde{X}$ is the lift of the operator on $X$. Can we relate the spectral mass functions $\widetilde{M}(\lambda)$ and $M(\lambda)$?

If $\widetilde{K}_{\lambda}(x,y)$ is the kernel of the spectral projection on $\widetilde{X}$, then 
\begin{equation}
K_{\lambda}(x,y)=\sum_{\gamma\in\Gamma}\widetilde{K}_{\lambda}(\gamma\pi^{-1}(x),\pi^{-1}(y))
\end{equation}is the kernel of the spectral projection on $X$. Whatever notion of average we have on $X$ lifts to $\widetilde{X}$. The summand corresponding to $\gamma=$ identity $e$ in $\Gamma$ will yield $\widetilde{M}(\lambda)$ when we average over $\pi^{-1}(x)=\pi^{-1}(y)$. So we may conclude that $M(\lambda)$ and $\widetilde{M}(\lambda)$ have the same asymptotic behavior if we can show that
\begin{equation}
\mbox{Ave }\widetilde{K}_{\lambda}(\gamma z,z)\;\;\;\;\;\;\;\mbox{ for }\gamma\neq e
\end{equation}has a slower growth rate than $\widetilde{M}(\lambda)$.

\textbf{Example 8.1}: Let $X$ be the circle of $\RR/\ZZ$ and $\widetilde{X}$ be circle $\RR/N\ZZ$ for some positive integer $N$, with $\pi(x)=x\mod1$. The eigenfunctions of $-\frac{d^{2}}{dx^{2}}$ on $X$ are $e^{2\pi ikx}$ with $k\in\ZZ$ with eigenvalues $(2\pi k)^{2}$, so 
\begin{equation}
M(\lambda)=N(\lambda)=1+2\left[\frac{\sqrt\lambda}{2\pi}\right],
\end{equation}(where $[x]$ denotes the greatest integer in $x$)

 On the other hand, the eigenfunctions of $-\frac{d^{2}}{dx^{2}}$ on $\widetilde{X}$ are $e^{2\pi ikx/N}$ with $k\in\ZZ$ with eigenvalues $(2\pi k/N)^{2}$, so
\begin{equation}
\widetilde{M}(\lambda)=\frac{1}{N}\widetilde{N}(\lambda)=\frac{1}{N}\left(1+2\left[\frac{N\sqrt\lambda}{2\pi}\right]\right).
\end{equation} It is clear from inspection that
\begin{equation}
M(\lambda)=\widetilde{M}(\lambda)+O(1)\;\;\;\;\;\;\text{ as }\lambda\to\infty
\end{equation} but we would like to deduce this from (8.1) and (8.2). Here we have

\begin{equation}
\widetilde{K}_{\lambda}(x,y)=\frac{1}{N}\sum_{|k|\leq\left[\frac{N\sqrt\lambda}{2\pi}\right]}e^{2\pi i\frac{k}{N}(x-y)}
\end{equation}so (8.1) says

\begin{equation}
K_{\lambda}(x,x)=\sum_{\gamma=0}^{N}\frac{1}{N}\sum_{|k|\leq\left[\frac{N\sqrt\lambda}{2\pi}\right]}e^{2\pi i\frac{k\gamma}{N}}.
\end{equation}Of course, the $\gamma=0$ term yields (8.4), while if $\gamma\neq0$ the sum over $k$ is bounded by $N$, so the contribution to (8.7) is uniformly bounded for all $\lambda$.

\textbf{Example 8.2:} Let $X$ be again the circle $\RR/\ZZ$ but now let $\widetilde{X}=\RR$. Then, as observed in Example 3.1,
\begin{equation}
\widetilde{K}_{\lambda}(x,y)=\frac{1}{(2\pi)}\int_{-\sqrt\lambda}^{\sqrt\lambda}e^{i(x-y)\cdot\xi}d\xi=\frac{1}{\pi}\;\frac{\sin\sqrt\lambda(x-y)}{(x-y)}
\end{equation}and
\begin{equation}
\widetilde{M}(\lambda)=\frac{\sqrt\lambda}{\pi}
\end{equation} so (8.5) continues to hold. From (8.1) we have
\begin{equation}
K_{\lambda}(x,x)=\frac{\sqrt\lambda}{\pi}+\sum_{\gamma\neq0}\frac{1}{\pi}\;\frac{\sin\sqrt\lambda\gamma}{\gamma}.
\end{equation} The infinite series in (8.10) is not absolutely convergent, but it is easy to see by summation by parts that it is uniformly bounded.

Of course (8.10) is a consequence of the Poisson summation formula, and a similar description works for $\RR^{n}$ covering $\RR^{n}/\ZZ^{n}$. The problem with nonabsolute convergence in (8.10) may be overcome by replacing the spectral projections with regularized approximations.

\textbf{Example 8.3: Compact hyperbolic manifolds }

Let $\widetilde{X}$ be the hyperbolic spaces $H^{n}$ of example 3.3, and let $\Gamma$ be a cocompact discrete group of isometries, so $X$ is a compact manifold of constant negative curvature. Then $\widetilde{K}_{\lambda}$ is given by (3.9), but now we need to have an expression for $\varphi_{t}(d(x,y))$ for all values of $d(x,y)$, not just zero. The result is known and can be given explicitly in terms of Legendre functions, but to simplify the discussion we take $n=3$, when
\begin{equation}
\varphi_{t}(\cosh\gamma)=\frac{1}{2\pi^{2}}\;\frac{t\sin tr}{\sinh r}
\end{equation} and so
\begin{align}
\widetilde{K}_{\lambda}(x,y)&=\int_{0}^{\sqrt{\lambda-1}}\varphi_{t}(d(x,y))dt\\
&=\frac{1}{2\pi^{2}}\frac{\sin\sqrt{\lambda-1}\;r-\sqrt{\lambda-1}\;r\cos\sqrt{\lambda-1}\;r}{r^{2}\sin  hr}  \notag
\end{align}for $d(x,y)=\cosh r$.

Note that in the sum
\begin{equation}
K_{\lambda}(x,x)=\sum_{\gamma\in\Gamma}\widetilde{K}_{\lambda}(\gamma x,x)
\end{equation}the choice $\gamma=e$ corresponds to $r=0$ and gives the value $c_{3}(\lambda-1)^{3/2}$ for all $x$. Note that for $\gamma\neq e$ we have $r\neq0$, and (8.12) has growth of order $O(\lambda^{1/2})$, and moreover the dependence on $r$ has exponential decrease as $r\to\infty$. The values of $d(\gamma x,x)$ will increase as $\gamma\to\infty$ in $\Gamma$, but $r=\cosh^{-1} d(\gamma x, x)$ will increase at a slower rate. The values also depend on $x$, but most likely this is not too significant. It seems plausible that 
\begin{align}
&\sum_{\gamma\neq e}\widetilde{K}_{\lambda}(\gamma x,x)=\\ &\sum_{\gamma\neq e}\frac{1}{2\pi^{2}}\;\frac{\sin\sqrt{\lambda-1}\cosh^{-1}d(\gamma x,x)-\sqrt{\lambda-1}\cosh^{-1}(d(\gamma x, x))\cos\sqrt{\lambda-1}\cosh^{-1}(d(\gamma x,x))}{(\cosh^{-1}d(\gamma x,x))^{2}\sqrt{d(\gamma x,x)^{2}-1}}\notag
\end{align} is of order $O(\lambda^{1/2})$ independent of $x$. Note that this may require further assumptions about the group $\Gamma$, and it may be that the infinite series is not absolutely convergent.

Of course we know from the Weyl asymptotic law that the leading term of the asymptotic behavior of $M(\lambda)$ is $c_{3}\lambda^{3/2} $, with remainder term $O(\lambda^{1})$. The whole point of this approach is to reduce the remainder estimate to $O(\lambda^{1/2})$.

\textbf{Example 8.4: Finite volume hyperbolic manifolds}

Consider the same setting as the previous example, but now do not assume that $\Gamma$ is cocompact, but only that $X=\widetilde{X}/\Gamma$ has finite volume. We cannot assert that the Laplacian has discrete spectrum, so we must use $M(\lambda)$ rather than $N(\lambda)$. However, since $X$ has finite volume, the average in defining $M(\lambda)$ is simply 
\begin{equation}
M(\lambda)=\frac{1}{\mu(X)}\int_{X}K_{\lambda}(x,x)d\mu(x).
\end{equation}Again, the $\gamma=e$ term in (8.13) contributes exactly $c_{3}(\lambda-1)^{3/2}$ to (8.15), so once again the problem is to estimate (8.14) by $O(\lambda)^{1/2}$. Of course, even an estimate $o(\lambda^{3/2})$ would be interesting in this case, since we don't have a Weyl asymptotic law to fall back on.

\textbf{Example 8.5: Compact nilmanifolds}

Let $\widetilde{X}=\heis_{n}$ as in example 3.4, and let $\Gamma$ be a cocompact discrete subgroup of $\heis_{n}$. Note that we are dealing with the subLaplacian on $X=\widetilde{X}/\Gamma$. For simplicity we will take $\Gamma=\ZZ^{2n}\times\frac{1}{2}\ZZ$ in $\CC^{n}\times\RR$. Then
\begin{equation}
K_{\lambda}((z,t),(z,t))=\sum_{\zeta\in\ZZ^{2n}}\sum_{\ell=-\infty}^{\infty}\sum_{\epsilon=\pm1}\sum_{k=0}^{\infty}\varphi_{s,k,\epsilon}\left(\zeta,\frac{\ell}{2}-\imm \zeta\overline{z}\right)d\zeta
\end{equation}where $\varphi_{s,k,\epsilon}$ is given by (3.17).

The choice $(\zeta,\ell)=(0,0)$ gives exactly $\widetilde{M}(\lambda)$ given by (3.21), so it remains to estimate the contribution to (8.16) from all the other choices of $(\zeta,\ell)$. From (3.17) we see this is explicitly the sum of 
\begin{align}
\sum_{k=0}^{\infty}&\int_{0}^{\lambda}\frac{s^{n}}{(2\pi)^{n}(n+2k)^{n+1}}\times\notag\\
&\times2\cos\left(\frac{s}{n+2k}\left(\frac{\ell}{2}-\imm \zeta\overline{z}\right)\exp\left(\frac{-s|\zeta|^{2}}{-4(n+2k)}\right)L_{k}^{n-1}\left(\frac{s|\zeta|^{2}}{2(n+2k)}\right)\right)ds
\end{align}over $(\zeta,\ell)\neq(0,0)$. It seems rather challenging to estimate this sum.



In fact the desired estimates are obtained in [T], p. 79-80 by first studying the heat kernel and then applying a Tauberian Theorem. The heat kernel has the advantage of rapid decay at infinity to avoid convergence problems. A direct approach to these quotients for certain choices of $\Gamma$ is given in [Fo].

It would also be interesting to relate these results to the group representation decomposition of $L^{2}(\heis_{n}/\Gamma)$ in [R].

\textbf{Acknowledgements:} The author is grateful to one of the referees who provided very detailed and helpful comments on the original version of this paper.



\begin{thebibliography}{10000}

\bibitem[ASST]{ASST}
Bryant Adams, S.~Alex Smith, Robert~S. Strichartz, and Alexander Teplyaev.
\newblock The spectrum of the {L}aplacian on the pentagasket.
\newblock In {\em Fractals in {G}raz 2001}, Trends Math., pages 1--24.
  Birkh\"auser, Basel, 2003.

\bibitem[BKS]{BKS}
M.~Begue, T.~Kollantsnkis, and R.~Strichartz.
\newblock in preptaration.

\bibitem[Bsh]{BSh}
F.~A. Berezin and M.~A. Shubin.
\newblock {\em The {S}chr\"odinger equation}, volume~66 of {\em Mathematics and
  its Applications (Soviet Series)}.
\newblock Kluwer Academic Publishers Group, Dordrecht, 1991.
\newblock Translated from the 1983 Russian edition by Yu. Rajabov, D. A.
  Le{\u\i}tes and N. A. Sakharova and revised by Shubin, With contributions by
  G. L. Litvinov and Le{\u\i}tes.

\bibitem[Ca]{Ca}
P.~Cartier.
\newblock Harmonic analysis on trees.
\newblock In {\em Harmonic analysis on homogeneous spaces ({P}roc. {S}ympos.
  {P}ure {M}ath., {V}ol. {XXVI}, {W}illiams {C}oll., {W}illiamstown, {M}ass.,
  1972)}, pages 419--424. Amer. Math. Soc., Providence, R.I., 1973.

\bibitem[CGT]{CGT}
Jeff Cheeger, Mikhail Gromov, and Michael Taylor.
\newblock Finite propagation speed, kernel estimates for functions of the
  {L}aplace operator, and the geometry of complete {R}iemannian manifolds.
\newblock {\em J. Differential Geom.}, 17(1):15--53, 1982.

\bibitem[FT-N]{FT-N}
Alessandro Fig{\`a}-Talamanca and Claudio Nebbia.
\newblock {\em Harmonic analysis and representation theory for groups acting on
  homogeneous trees}, volume 162 of {\em London Mathematical Society Lecture
  Note Series}.
\newblock Cambridge University Press, Cambridge, 1991.

\bibitem[Fo]{Fo}
G.~B. Folland.
\newblock Compact {H}eisenberg manifolds as {CR} manifolds.
\newblock {\em J. Geom. Anal.}, 14(3):521--532, 2004.

\bibitem[GRS]{GRS}
Michael Gibbons, Arjun Raj, and Robert~S. Strichartz.
\newblock The finite element method on the {S}ierpinski gasket.
\newblock {\em Constr. Approx.}, 17(4):561--588, 2001.

\bibitem[G]{G}
Alexander Grigor'yan.
\newblock {\em Heat kernel and analysis on manifolds}, volume~47 of {\em AMS/IP
  Studies in Advanced Mathematics}.
\newblock American Mathematical Society, Providence, RI, 2009.

\bibitem[I]{I}
V. Ya. Ivrii.
\newblock {\em Precise spectral asymptotics for elliptic operators acting in fiberings over manifolds with boundary,}
\newblock volume 1100 of {\em Lecture Notes in Math.,} Springer, Berlin, 1984.

\bibitem[Ki]{Ki}
Jun Kigami.
\newblock {\em Analysis on fractals}, volume 143 of {\em Cambridge Tracts in
  Mathematics}.
\newblock Cambridge University Press, Cambridge, 2001.

\bibitem[KL]{KL}
Jun Kigami and Michel~L. Lapidus.
\newblock Weyl's problem for the spectral distribution of {L}aplacians on
  p.c.f.\ self-similar fractals.
\newblock {\em Comm. Math. Phys.}, 158(1):93--125, 1993.

\bibitem[KM]{KM}
W. Kirsch and B. Metzger.
\newblock The integrated density of states for random Schr\"{o}dinger operators.
\newblock {\em Proc. Symp. Pure Math.}, 76 part 2:649--696, 2007.

\bibitem[L]{L}
B. M. Levitan. 
\newblock On the asymptotic behavior of the spectral function of a self-adjoint differential second order equation.
\newblock {\em Izv. Akad. Nauk SSSR Ser. Mat,} 16, no.1:325--352, 1952. (Russian)

\bibitem[PS]{PS}
L. Parnovski and R. Shtevenberg.
\newblock Complete asymptotic expansion of the integrated density of states of multidimensional almost-periodic Schršdinger operators,
\newblock preprint, arXiv:1004:2939v2.

\bibitem[RS]{RS}
Michael Reed and Barry Simon.
\newblock {\em Methods of modern mathematical physics. {I-IV}}.
\newblock Academic Press Inc. [Harcourt Brace Jovanovich Publishers], New York,
  second edition, 1980.
\newblock Functional analysis.

\bibitem[R]{R}
Leonard~F. Richardson.
\newblock Decomposition of the {$L^{2}$}-space of a general compact
  nilmanifold.
\newblock {\em Amer. J. Math.}, 93:173--190, 1971.

\bibitem[S1]{S1}
Robert~S. Strichartz.
\newblock Fractafolds based on the {S}ierpi\'nski gasket and their spectra.
\newblock {\em Trans. Amer. Math. Soc.}, 355(10):4019--4043 (electronic), 2003.

\bibitem[S2]{S2}
Robert~S. Strichartz.
\newblock Function spaces on fractals.
\newblock {\em J. Funct. Anal.}, 198(1):43--83, 2003.

\bibitem[Sh]{Sh1}
M.~A.~Shubin.
\newblock Spectral theory and the index of elliptic operators with
  almost-periodic coefficients.
\newblock {\em Russian Math Surveys}, 34:109--157, 1979.

\bibitem[SV1]{SV1}
Yu. Safarov and D. Vassiliev.
\newblock The asymptotic distribution of eigenvalues of differential operators.
\newblock {\em Amer. Math. Soc. Transl. Ser.2,} vol. 150:55--111, 1992.

\bibitem[SV2]{SV2}
Yu. Safarov and D. Vassiliev.
\newblock {\em The asymptotic distribution of eigenvalues of partial differential operators,}
\newblock volume 155 of {\em Translations of Mathematical Monographs.} American Mathematical Society, Providence, R.I, 1999.

\bibitem[Ta]{Ta}
Reiji Takahashi.
\newblock Sur les repr\'esentations unitaires des groupes de {L}orentz
  g\'en\'eralis\'es.
\newblock {\em Bull. Soc. Math. France}, 91:289--433, 1963.

\bibitem[T]{T}
Michael~E. Taylor.
\newblock {\em Noncommutative harmonic analysis}, volume~22 of {\em
  Mathematical Surveys and Monographs}.
\newblock American Mathematical Society, Providence, RI, 1986.

\bibitem[Te]{Te}
Alexander Teplyaev.
\newblock Spectral analysis on infinite {S}ierpi\'nski gaskets.
\newblock {\em J. Funct. Anal.}, 159(2):537--567, 1998.

\end{thebibliography}
\end{document}